\documentclass[aip,jmp,sd,amsmath,amssymb,preprint,author-numerical]{revtex4-1}
\usepackage{dcolumn}
\usepackage{bm}
\usepackage{comment}

\newtheorem{theorem}{Theorem}

\begin{document}

\title{Group classification of a class of generalized nonlinear Kolmogorov equations\\ and exact solutions}

\author{I. Rassokha}
 \email{innaolha@gmail.com}
 \author{M. Serov}
 \email{mserov4@gmail.com}
\affiliation{Poltava National Technical Yuri Kondratyuk University,
24 Pershotravnevyi Ave., 36011 Poltava, Ukraine}%

\author{S. Spichak}
 \email{spichak@imath.kiev.ua}
\affiliation{Institute of Mathematics, National Academy of Sciences of Ukraine, 3 Tereshchenkivs'ka Str., 01004 Kyiv-4, Ukraine}

\author{V. Stogniy}
 \email{stogniyvaleriy@gmail.com}
\affiliation{National Technical University of Ukraine ``Igor Sikorsky Kyiv Polytechnic Institute'',
37 Peremohy Ave., 03056 Kyiv, Ukraine}

\date{\today}

\begin{abstract}
A class of generalized nonlinear Kolmogorov equations is investigated. We present the group classification
of Lie symmetries of the class with respect to the group of equivalence transformations.
We find a number of exact solutions of nonlinear
Kolmogorov equations which has the maximal symmetry properties.
\end{abstract}

\pacs{02.20.Sv, 02.30.Jr, 92.60.Ek, 94.10.Lf}
\keywords{generalized nonlinear Kolmogorov equation, group classification, exact solution}

\maketitle

\section{\label{1}Introduction}

In this article, we study the class of the generalized nonlinear Kolmogorov type equations:
\begin{eqnarray}\label{spich1}
u_t -u_{xx} +f^1(u)u_y =f^2(u),
\end{eqnarray}
where $u=u(t,x)$, $u_t=\frac{\partial u}{\partial t}$, $u_x=\frac{\partial u}{\partial x}$, $u_x=\frac{\partial u}{\partial x}$, $u_{xx}=\frac{\partial^2 u}{\partial x^2}$; $f^1(u)$, $f^2(u)$ are arbitrary smooth functions of their variables, and $f^1\ne \text{const}$. If $f^1=\text{const}$ then equation (\ref{spich1}) can be reduced to
a spatially one-dimensional nonlinear one ($f^1=0$) by corresponding Galilean transformation.
The group classification of such equations was carried out, in particular, in Ref.~\onlinecite{stog28}.

Equation (\ref{spich1}) occurs in problems of financial mathematics and in studying of the physical
nonlinear phenomena such as the combined effects of diffusion and convection of matter \cite{stog1}.

Wide application of equation (\ref{spich1}) causes an undeniable interest in obtaining solutions.
One of the most powerful methods for constructing exact solutions of nonlinear PDEs is the classical Lie method \cite{stog2, stog3, stog4},
and its various generalizations and modifications \cite{stog5}.

A common method for solving nonlinear differential equations is the Lie method, which is based on the principle of symmetry.
S.~Lie was the first who used an invariance algebra of differential equations for symmetry reduction and for finding exact solutions.
The theoretical-group methods allow you to integrate differential equations, which have non-trivial invariance group.
That's why the urgent task is a complete group classification of differential equations with arbitrary functions that allows
to select the equations with broad symmetry properties from the given class of equations.

The interest in the group classification of equations exists for
more than 50 years, since L.V.~Ovsyannikov in his
article \cite{stog6} has made the current formulation of the problem
of group classification of differential equations, proposed a method
for its solution and implemented a group classification of nonlinear
heat equation. Detailed review of publications of group
classification of differential equations until the mid 90-ies of the
last century is given in Ref.~\onlinecite{stog7}. Group
classification of evolution equations in spaces of dimension higher
than the two-dimensional space-time is considered in Refs.~\onlinecite{stog8, stog9, stog10, stog11, stog12, stog13, stog15, stog17}.

The problem of group classification of equation (\ref{spich1}) for the case $f^2(u)=0$ was carried out in Refs.~\onlinecite{stog15, stog18} relatively to an arbitrary function $f^1(u)$, and, in particular, shows that the nonlinear Kolmogorov equation
\begin{eqnarray*}
u_t -u_{xx} -uu_y =0
\end{eqnarray*}
admits $6$-parameter group of local transformations and using the method of symmetry reduction, built exact solutions of that equation.

Thus, it becomes apparent urgency of the task complete group classification of equation \eqref{spich1} with respect to arbitrary functions and which was resolved by us.

\section{\label{2}The basic algebra $\boldsymbol{A^{\ker}}$}

Consider a one-parameter Lie group of local transformations in a space of variables $(t,x,y,u)$ with an infinitesimal operator
\begin{eqnarray}\label{spich2}
G=\xi ^0(t,x,y,u)\partial _t +\xi ^1(t,x,y,u)\partial _x
 +\xi^2(t,x,y,u)\partial _y +\eta (t,x,y,u)\partial _u ,
\end{eqnarray}
which keeps equations (\ref{spich1}) invariant. The Lie criterion of infinitesimal invariance yields the
following determining equations for functions $\xi^0$, $\xi^1$, $\xi^2$, $\eta$ and also for arbitrary
elements $f^1(u)$ and $f^2(u)$:
\begin{eqnarray}
&\label{spich3}\xi _x^0 =\xi _u^0 =\xi _u^1 =\xi _x^2 =\xi _u^2 =\eta _{uu} =0,\\
&\label{spich4}\xi _t^0 -2\xi _x^1 +\xi _y^0 f^1=0,\\
&\label{spich5}\xi _t^1 -\xi _{xx}^1 +\xi _y^1 f^1+2\eta _{ux} =0,\\
&\label{spich6}\eta \dot f^1 =\big(\xi _y^2 -2\xi _x^1 \big)f^1+\xi _t^2 ,\\
&\label{spich7}\eta \dot{f}^2 =\big(\eta _u -2\xi _x^1 \big)f^2+\eta _y f^1+\eta _t -\eta _{xx} ,
\end{eqnarray}
where the dot above the functions $f^1$, $f^2$ denote differentiation with respect to~$u$.

Equations (\ref{spich3}) do not contain arbitrary elements. Integration of them yields
\begin{eqnarray}\label{spich8}
\xi^0=\xi^0(t,y),\quad \xi^1=\xi^1(t,x,y),\quad
\xi^2=\xi^2(t,y),\quad  \eta =\alpha (t,x,y)u+\beta (t,x,y),
\end{eqnarray}
where $\alpha$, $\beta$ are arbitrary smooth functions.

Since the functions $\xi^0$, $\xi^1$ and $\alpha$ do not on variable $u$ and $f^1\ne \mbox{const}$ we obtain from (\ref{spich4}),
(\ref{spich5}), (\ref{spich8})
\begin{eqnarray}\label{spich9}
\xi_y^0 =0,\quad \xi _t^0 -2\xi _x^1 =0,\quad \xi _y^1 =0,\quad \xi _t^1 =-2\alpha _x .
\end{eqnarray}
Since (\ref{spich8}) the solutions of equations (\ref{spich9}) have the form
\begin{eqnarray}\label{spich10}
\xi ^0=\gamma (t),\quad \xi ^1=\tfrac{1}{2}{\gamma }'(t)x+\delta(t),\quad
\alpha =-\tfrac{1}{8}{\gamma }''(t)x^2-\tfrac{1}{2}{\delta }'(t)x+\mu(t,y),
\end{eqnarray}
where $\gamma$, $\delta$, $\mu$ are arbitrary smooth functions.

Taking into account (\ref{spich8}), (\ref{spich10}) and differentiating equation (\ref{spich6}) on $x$ we have\\
$(\alpha _xu +\beta _x )\dot f^1 =0$,
that is $\alpha _x =\beta _x =0$. Then relations (\ref{spich10}) has the form
\begin{eqnarray}\label{spich11}
\xi^0=2c_1 t+c_0 ,\quad \xi^1=c_1 x+c_2,\quad
\xi^2=\xi^2(t,y),\quad \eta =\alpha (t,y)u+\beta (t,y),
\end{eqnarray}
where $c_0 ,\ c_1 ,\ c_2 $ are constants.
So, equations (\ref{spich6}), (\ref{spich7}) has the form
\begin{eqnarray}
&\label{spich12}(\alpha u+\beta )\dot f^1 =(\xi _y^2 -2c_1 )f^1+\xi _t^2,\\
&(\alpha u+\beta )\dot f^2 =(\alpha -2c_1 )f^2+(\alpha _y u+\beta _y )f^1+\alpha _t u+\beta _t .\label{spich13}
\end{eqnarray}
Splitting system (\ref{spich12}), (\ref{spich13}) with respect to the arbitrary elements and their
non-vanishing derivatives gives the ``trivial'' solution (\ref{spich11})
\begin{eqnarray}\label{spich14}
\xi ^0=c_0 ,\quad \xi ^1=c_2 ,\quad \xi ^2=c_3 ,\quad \alpha =\beta =0,
\end{eqnarray}
which corresponds to the coefficients of the operators (\ref{spich2}) from the basic Lie
algebra $A^{\ker }$ of (\ref{spich1}). As a result, the following theorem is true.

\begin{theorem}\label{T1}
For arbitrary functions $f^1$ and $f^2$ the Lie symmetry algebra of the equation \eqref{spich1} is
$A^{\ker }=\left\langle {\partial _t ,\partial _x ,\partial _y }\right\rangle$.
\end{theorem}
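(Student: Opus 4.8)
The plan is to show that imposing invariance of \eqref{spich1} under the generic operator \eqref{spich2}, with $f^1,f^2$ left completely arbitrary, admits no infinitesimal symmetry beyond the three translations. Everything up to the reduced constraints \eqref{spich12}--\eqref{spich13} is already in hand: the Lie invariance criterion yields the determining system \eqref{spich3}--\eqref{spich7}; integrating the arbitrary-element-free equations \eqref{spich3} gives the structural form \eqref{spich8}; using $f^1\ne\mathrm{const}$ to split \eqref{spich4}, \eqref{spich5} in $u$ produces \eqref{spich9} and then the explicit $t$-profiles \eqref{spich10}; and differentiating \eqref{spich6} in $x$ while $\dot f^1\ne 0$ forces $\alpha_x=\beta_x=0$, collapsing the coefficients to the constant-coefficient form \eqref{spich11} and leaving only \eqref{spich12}, \eqref{spich13} to be analysed.

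The decisive step, and the one I expect to be the main obstacle, is the splitting of \eqref{spich12} and \eqref{spich13} with respect to the arbitrary elements. The point is that for \emph{arbitrary} $f^1,f^2$ the quantities $f^1,f^2,\dot f^1,\dot f^2$ behave as functionally independent data: at any fixed $u$ one may prescribe a function value and its derivative independently. Since the coefficients $\alpha,\beta,\xi^2_y,\xi^2_t$ and the constant $c_1$ do not depend on $u$, each must annihilate the corresponding power of $u$ separately. In \eqref{spich12} the coefficient $\alpha u+\beta$ of $\dot f^1$ must vanish identically in $u$, forcing $\alpha=\beta=0$, after which $\xi^2_y-2c_1=0$ and $\xi^2_t=0$. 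Feeding $\alpha=\beta=0$ into \eqref{spich13} leaves $-2c_1 f^2=0$ for arbitrary $f^2$, whence $c_1=0$; combined with $\xi^2_y=2c_1$ this gives $\xi^2_y=0$, so $\xi^2=c_3$ is constant. The delicate aspect is keeping track of which term a given power of $u$ or a given derivative of $f^i$ belongs to, and verifying that no nontrivial balance survives for every admissible pair $(f^1,f^2)$.

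Collecting these conclusions reproduces the ``trivial'' solution \eqref{spich14}: $\xi^0=c_0$, $\xi^1=c_2$, $\xi^2=c_3$, $\eta=0$. Substituting back into \eqref{spich2} gives the operator $c_0\partial_t+c_2\partial_x+c_3\partial_y$, so the surviving symmetries are spanned by the three manifestly independent, mutually commuting translations $\partial_t,\partial_x,\partial_y$. Hence $A^{\ker}=\langle\partial_t,\partial_x,\partial_y\rangle$, which is the assertion of the theorem.
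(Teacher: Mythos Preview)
Your proposal is correct and follows essentially the same approach as the paper: both proceed from the determining equations \eqref{spich3}--\eqref{spich7} through the reductions \eqref{spich8}--\eqref{spich11} to the classifying relations \eqref{spich12}--\eqref{spich13}, and then split with respect to the arbitrary elements $f^1,f^2,\dot f^1,\dot f^2$ to obtain \eqref{spich14}. The paper merely states this final splitting in one sentence, whereas you have written out the details ($\alpha=\beta=0$, then $\xi^2_y-2c_1=\xi^2_t=0$, then $c_1=0$) explicitly; the logic is identical.
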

The problem of group classification consists in finding of all possible inequivalent cases when the solution
of system (\ref{spich12}), (\ref{spich13}) leads to Lie algebra $A^{\max }$, that satisfies the condition $A^{\ker}\subset A^{\max }$,
i.e., to four- and higher-dimensional Lie algebra.

\section{\label{3}Group of equivalence transformations}

The next step of the algorithm of group classification is finding equivalence transformations \cite{stog2, stog19, stog20} of class (\ref{spich1}).

\begin{theorem}\label{T2}
An arbitrary nonlinear Kolmogorov type equation of the form \eqref{spich1} can be mapped to another equation of the same form
\begin{eqnarray}\label{spich15}
\bar {u}_{\bar {t}} -\bar {u}_{\bar {x}\bar {x}} +F^1(\bar {u})\bar{u}_{\bar {y}} =F^2(\bar {u})
\end{eqnarray}
by the local transformations
\begin{eqnarray*}
 \bar {t}=T(t,x,y,u),\quad \bar {x}=X(t,x,y,u),\quad \bar{y}=Y(t,x,y,u), \quad \bar {u}=U(t,x,y,u),
\end{eqnarray*}
with the correctly-specified smooth functions $T$, $X$, $Y$ and $U$ and Jacobian
$J(t,x,y,u)=\frac{\partial(T,X,Y,U)}{\partial(t,x,y,u)}\ne 0$,
if these functions are of the form
\begin{eqnarray}\label{spich16}
\bar {t}=a_1^2 t+a_2,\quad \bar {x}=a_1 x+a_3 ,\quad \bar{y}=Y(t,y), \quad \bar {u}=\theta (t,y) u+\nu (t,y) ,
\end{eqnarray}
and following equalities take place:
\begin{eqnarray}
&\label{spich17}Y_t +Y_y f^1(u)=a_1^2 F^1(\bar {u}),\\
&\label{spich18}\theta _t u+\nu _t +(\theta _y u+\nu _y )f^1(u)+\theta f^2(u)=a_1^2 F^2(\bar{u}),
\end{eqnarray}
where $a_1$, $a_2$, $a_3 $ are real arbitrary constants, $a_1 \ne 0$, $Y_y \ne 0$, $\theta (t,y)\ne 0$.
\end{theorem}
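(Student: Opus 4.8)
The plan is to apply the direct method for computing equivalence transformations. I start from a completely general invertible point transformation $\bar t=T(t,x,y,u)$, $\bar x=X(t,x,y,u)$, $\bar y=Y(t,x,y,u)$, $\bar u=U(t,x,y,u)$ with $J\ne 0$, and impose the defining requirement of an equivalence map: for \emph{every} choice of $f^1,f^2$, whenever $u$ solves \eqref{spich1} the image $\bar u$ must solve an equation of the identical form \eqref{spich15} for some induced elements $F^1(\bar u),F^2(\bar u)$. Expressing the new derivatives $\bar u_{\bar t}$, $\bar u_{\bar y}$ and $\bar u_{\bar x\bar x}$ through the chain rule in terms of $t,x,y,u$ and the derivatives of $u$, I substitute them into \eqref{spich15}, use \eqref{spich1} to eliminate $u_{xx}$, and then regard the surviving derivatives of $u$ as functionally independent.

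The decisive step is the analysis of the second-order term. A fully general point transformation makes $\bar u_{\bar x\bar x}$ contain the derivatives $u_{tt},u_{yy},u_{xy},u_{ty}$ as well as quadratic combinations of $u_t,u_x,u_y$, none of which may appear in a target equation of the form \eqref{spich15}. First I would split with respect to these independent jet variables and set each spurious coefficient to zero. The vanishing of the coefficients of $u_{tt}$, $u_{yy}$ and the mixed second derivatives forces $T$ to depend on $t$ alone, $Y$ on $(t,y)$ alone, and $X$ on $(t,x)$ alone, with all four functions free of $u$ except for $U$; the vanishing of the quadratic first-order terms then forces $X$ affine in $x$ and $U$ affine in $u$, i.e. $\bar u=\theta(t,y)u+\nu(t,y)$. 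This is exactly the structural skeleton of \eqref{spich16}.

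With the transformation reduced to this shape, the remaining conditions follow by matching coefficients. Requiring $\bar u_{\bar t}-\bar u_{\bar x\bar x}$ to be proportional to the combination $u_t-u_{xx}$ (so that \eqref{spich1} is usable) yields the parabolic relation $T_t=X_x^2$, and the absence of any $\bar u_{\bar x}$ term in the target then forces $X$ to have constant $x$-slope and no $t$-dependence, giving $\bar x=a_1 x+a_3$ and hence $\bar t=a_1^2t+a_2$ with $a_1\ne 0$. Collecting the coefficient of the surviving first derivative $\bar u_{\bar y}$ reproduces \eqref{spich17}, while gathering the terms free of derivatives of $u$ assembles \eqref{spich18}; the nondegeneracy $Y_y\ne 0$, $\theta\ne 0$ is precisely what $J\ne 0$ requires. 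Finally I would verify the converse by direct substitution: any map of the form \eqref{spich16} satisfying \eqref{spich17}--\eqref{spich18} indeed carries \eqref{spich1} into \eqref{spich15}.

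I expect the main obstacle to lie in the second paragraph---carrying out the full chain-rule expansion of $\bar u_{\bar x\bar x}$ for an unrestricted point transformation and organizing the resulting split so that the structural conclusions ($T=T(t)$, $X$ affine in $x$, $U$ affine in $u$, $Y=Y(t,y)$) drop out cleanly. Once the transformation is pinned to its affine-in-$(x,u)$ form, the coefficient comparison that produces \eqref{spich17} and \eqref{spich18} is routine.
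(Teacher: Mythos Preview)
The paper states Theorem~2 without proof and moves directly to Theorem~3, so there is nothing in the text to compare against. Your plan via the direct method---expand the image equation through the chain rule, eliminate $u_{xx}$ using \eqref{spich1}, and split on the surviving jet variables---is the standard and correct route to results of this type, and your identification of the chain-rule expansion of $\bar u_{\bar x\bar x}$ as the crux is accurate.

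One conceptual point worth tightening: you phrase the defining requirement as holding ``for \emph{every} choice of $f^1,f^2$'', which characterises the equivalence group $G^{\rm equiv}$ of the class and is the content of Theorem~3, not Theorem~2. Theorem~2 concerns the larger set of form-preserving (admissible) point transformations carrying a \emph{fixed} source equation to some target of the same shape; the paper relies on precisely this extra generality in Section~\ref{5} to build the additional equivalence transformations that exist only for special $f^1,f^2$. This does not damage your argument, because the constraints that force the structural form \eqref{spich16} come from killing the spurious second-order terms, the $u_x^2$ term, and the linear $u_x$ term, and these coefficients involve only the Jacobian data of $(T,X,Y,U)$ and not the arbitrary elements $f^1,f^2$; the split therefore works identically whether or not $f^1,f^2$ are fixed. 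You should, however, state the hypothesis accurately.

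A couple of your attributions are also slightly off in the details: $X_{xx}=0$ is forced not by the quadratic split but by the parabolic relation $T_t=X_x^2$ (since $T=T(t)$, this makes $X_x$ a function of $t$ alone), and the $x$-independence of $\theta,\nu$ together with $X_t=0$ comes from the vanishing of the linear $u_x$ coefficient (the source equation contains no $u_x$). None of this affects the validity of the overall plan.
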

From (\ref{spich16})--(\ref{spich18}) we obtain the set of equivalence transformations which is the local transformations
group $G^{\rm equiv}$ preserving the form equations (\ref{spich1}).

\begin{theorem}\label{T3}
Group $G^{\rm equiv}$ of the equation \eqref{spich1} contains the following continuous transformations:
\begin{eqnarray}\label{spich19}
\begin{aligned}
&\hspace{-5mm} \bar{t}=a_1^2 t+a_2 ,\ \bar {x}=a_1 x+a_3 ,\ \bar{y}=a_4 y+a_5 t+a_{6},\ \bar{u}=a_7 u+a_8,\\
&\hspace{-5mm} F^1=\frac{a_4 }{a_1^2 }f^1+\frac{a_5 }{a_1^2 },\ F^2=\frac{a_7 }{a_1^2 }f^2,
\end{aligned}
\end{eqnarray}
where $a_i$, $i\in\{1,\ldots,8\}$  are arbitrary constants, $a_1 a_4 a_7 \ne 0.$
\end{theorem}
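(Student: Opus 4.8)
The plan is to start from the general form of equivalence transformations already established in Theorem~\ref{T2}, namely \eqref{spich16} together with the compatibility relations \eqref{spich17} and \eqref{spich18}, and to reduce the still-free functions $Y(t,y)$, $\theta(t,y)$, $\nu(t,y)$ to the affine/constant shapes asserted in \eqref{spich19}. The governing principle is that $G^{\rm equiv}$ must preserve the class \eqref{spich1} for \emph{all} admissible arbitrary elements simultaneously; hence $F^1$ and $F^2$ in \eqref{spich17}--\eqref{spich18} are genuine functions of $\bar u=\theta u+\nu$ alone, whereas $f^1$ and $f^2$ may be treated as arbitrary functions of $u$. This is exactly what licenses splitting the relations with respect to $f^1$, $f^2$ and their $u$-derivatives.

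First I would exploit \eqref{spich17}. Writing $\bar u=\theta(t,y)u+\nu(t,y)$ and differentiating $a_1^2F^1(\bar u)=Y_t+Y_yf^1(u)$ once in $u$ and once in $y$, one eliminates $a_1^2F^1_{\bar u}$ between the two resulting identities and reads the outcome as an identity in $u$. Since for arbitrary $f^1$ the functions $1,\,f^1,\,\dot f^1,\,u\dot f^1$ are linearly independent, equating coefficients forces $\theta_y=0$, $\nu_y=0$, $Y_{yy}=0$ and $Y_{ty}=0$. Thus $\theta=\theta(t)$, $\nu=\nu(t)$, and $Y_y$ is a constant $a_4$, so $Y=a_4 y+g(t)$.

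Next I would differentiate \eqref{spich17} in $t$, insert the expression for $a_1^2F^1_{\bar u}$ coming from the $u$-derivative, and split once more with respect to $\dot f^1$ and $u\dot f^1$; this yields $\theta'=0$, $\nu'=0$ and $g''=0$. Consequently $\theta\equiv a_7$ and $\nu\equiv a_8$ are constants while $g(t)=a_5 t+a_6$, giving $Y=a_4 y+a_5 t+a_6$ and $\bar u=a_7 u+a_8$. Reinserting these into \eqref{spich17} produces $a_1^2F^1=a_4 f^1+a_5$, i.e.\ the stated formula for $F^1$. Finally, because $\theta$ and $\nu$ are now constant, every derivative term in \eqref{spich18} vanishes and the relation collapses to $a_7 f^2=a_1^2F^2$, delivering $F^2=\tfrac{a_7}{a_1^2}f^2$. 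The nondegeneracy inequalities $a_1\neq0$, $Y_y=a_4\neq0$ and $\theta=a_7\neq0$ inherited from Theorem~\ref{T2} combine into $a_1 a_4 a_7\neq0$.

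The step I expect to be the main obstacle is justifying the coefficient-splitting rigorously. It is valid precisely because $G^{\rm equiv}$ must act on the whole class, so $f^1$ (and $f^2$) range over arbitrary smooth functions and $\{1,f^1,\dot f^1,u\dot f^1\}$ may be assumed linearly independent. One has to maintain this class-wide viewpoint consistently: for special degenerate $f^1$ additional transformations can arise, but these belong to conditional or additional equivalences rather than to $G^{\rm equiv}$ itself and therefore do not enter the statement.
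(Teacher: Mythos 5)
Your proposal is correct and takes essentially the same route as the paper, which proves Theorem~\ref{T3} only implicitly by asserting that \eqref{spich19} follows from \eqref{spich16}--\eqref{spich18}; you supply exactly the missing details, namely splitting \eqref{spich17}--\eqref{spich18} with respect to the arbitrary element $f^1$ and its derivatives (justified by the class-wide requirement) to force $\theta_y=\nu_y=Y_{yy}=Y_{ty}=0$, then $\theta'=\nu'=g''=0$, and hence the affine form of $\bar y$, $\bar u$ and the stated formulas for $F^1$, $F^2$. The computations and the nondegeneracy conditions $a_1a_4a_7\ne 0$ all check out.
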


Note that the transformations (\ref{spich19}) reduce equations (\ref{spich1}) to (\ref{spich15}) for
arbitrary functions $f^1$, $f^2$. For special view of these functions there are another (additional) equivalence
transformations. These transformations will be further applied to unite the equations~(\ref{spich1}) with the same Lie symmetry.

\section{\label{4}Classification of Lie symmetries}

The next step of the algorithm of group classification is finding a complete set of inequivalent equations (\ref{spich1}) with respect
to the transformations from $G^{\rm equiv}$ which are invariant under Lie algebra $A^{\max }\supset A^{\ker}$ ($\dim A^{\max}>\dim A^{\ker}$).
Hereinafter, the notation $A^{\max}$ implies the fulfillment of this condition.

\begin{center}
{\bf Classification of functions $f^1$}
\end{center}

To construct all possible form of the functions $f^1$ for which corresponding set of solutions of system (\ref{spich12}), (\ref{spich13})
is wider than ``trivial'' solution (\ref{spich14}) we analyze these equations along zero and non-zero values of the $\alpha$ and $\beta$.

Consider the simplest {\bf Case} when for any solution (\ref{spich11}) $\alpha =0,\quad \beta =0$, i.e., $\eta =0$.
Then system (\ref{spich12}), (\ref{spich13}) is reduced to the algebraic one with respect to~$f^1$ and~$f^2$
\begin{eqnarray}\label{spich20}
(\xi _y^2 -2c_1 )f^1+\xi _t^2 =0,\quad c_1 f^2=0.
\end{eqnarray}
One easily sees that Lie algebra $A^{\max }$ occurs only under the condition $c_1 \ne 0,$ otherwise the general solution of (\ref{spich20}) leads to (\ref{spich14}) (corresponding to the algebra~$A^{\ker}$).

The condition $c_1 \ne 0$ immediately gives $f^2\equiv 0$. Therefore, system~(\ref{spich12}), (\ref{spich13}) with arbitrary functions
$f^1\ne \mbox{const}$ and~$f^2\equiv 0$ has the general solution
\begin{eqnarray}\label{spich21}
\hspace{-3mm}\xi ^0=2c_1 t+c_0 ,\ \xi ^1=c_1 x+c_2 ,\ \xi^2=2c_1 y+c_3,
\end{eqnarray}
where $c_i$ $(i\in\{0,1,2,3\})$ are arbitrary constants. The solution (\ref{spich21}) leads to Lie algebra
\begin{eqnarray*}
\left\langle {\partial _t ,\ \partial _x ,\ \partial _y ,\ 2t\partial _t +x\partial _x +2y\partial _y } \right\rangle .
\end{eqnarray*}

Let us consider the condition $\alpha ^2+ \beta ^2\ne 0$. Since the functions $\xi ^2$, $\alpha $ and~$\beta $ do not on variable~$u$, one can obtain possible view the function$f^1$ from ODE~(\ref{spich12}).

\begin{theorem}\label{T4}
If an equation of the form \eqref{spich1} admits algebra $A^{\max }$ which includes infinitesimal operator \eqref{spich2} with $\eta \ne 0$ then the
function $f^1$ is equivalents with respect to the transformations $G^{\rm equiv}$ one of the form: $u^k$ $(k\ne 0)$, $e^u$, $\ln u$.
\end{theorem}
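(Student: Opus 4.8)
The plan is to reduce everything to the single relation (\ref{spich12}) and extract from it an ordinary differential equation for $f^1$ in the variable $u$ alone. By (\ref{spich11}) the coefficients $\alpha$, $\beta$, $\xi^2$ depend only on $(t,y)$, whereas $f^1=f^1(u)$; equation (\ref{spich12}) therefore couples the two groups of variables, and the whole task is to separate them. The natural device is differentiation.

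First I would differentiate (\ref{spich12}) with respect to $u$. Since $\alpha$, $\beta$, $\xi^2$ and $c_1$ carry no $u$-dependence, the inhomogeneous term $\xi_t^2$ drops out and one is left with
$$(\alpha u+\beta)\ddot f^1=(\xi_y^2-2c_1-\alpha)\dot f^1,$$
an identity in $u$ valid for every fixed $(t,y)$. The gain is that $f^1$ now enters only through the ratio $\ddot f^1/\dot f^1$, which is intrinsically a function of $u$ alone; here I assume $\dot f^1\ne0$, as is standard, since $f^1\ne\mathrm{const}$.

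Next I would exploit that this identity holds pointwise in $(t,y)$, so it suffices to freeze the coefficients at one convenient point. By hypothesis $\eta=\alpha u+\beta\not\equiv0$, hence $(\alpha,\beta)\ne(0,0)$ somewhere, and I would split into two cases. If there is a point $(t_0,y_0)$ with $\alpha\ne0$, then $\alpha u+\beta$ is a genuine linear function of $u$ with nonzero slope, and dividing gives $\ddot f^1/\dot f^1=r/(u+s)$ for constants $r,s$; integrating once yields $\dot f^1=C(u+s)^r$, so $f^1$ is a power $(u+s)^{r+1}$ when $r\ne-1$ and a logarithm $\ln(u+s)$ when $r=-1$. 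If instead $\alpha\equiv0$ (so $\beta\not\equiv0$), the identity reduces at a point where $\beta\ne0$ to $\ddot f^1/\dot f^1=\lambda$ with $\lambda$ constant, whence $f^1\sim e^{\lambda u}$ when $\lambda\ne0$ and $f^1$ is linear when $\lambda=0$.

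Finally I would invoke the equivalence group of Theorem~\ref{T3}: the substitutions $\bar u=a_7u+a_8$ and $F^1=(a_4/a_1^2)f^1+(a_5/a_1^2)$ absorb the shift $s$, rescale $u$, and adjust $f^1$ by an affine map, collapsing the power, logarithmic, linear and exponential forms to the canonical representatives $u^k$ ($k\ne0$, the linear case being $k=1$), $\ln u$, and $e^u$. The main obstacle is organizing the separation-of-variables step cleanly—recognizing that a single frozen value of $(t,y)$ already pins down the $u$-dependence of $f^1$ because $\ddot f^1/\dot f^1$ is a function of $u$ only—and handling the dichotomy $\alpha\ne0$ versus $\alpha\equiv0$, which is precisely what separates the power/logarithmic branch from the exponential branch.
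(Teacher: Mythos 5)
Your proof is correct and follows essentially the same route as the paper's: both treat the classifying equation~(\ref{spich12}) as an ODE in $u$ with $(t,y)$-dependent coefficients, split into the cases $\alpha\ne 0$ and $\alpha\equiv 0$, $\beta\ne 0$, obtain respectively the power/logarithmic and the exponential/linear forms of $f^1$, and then normalize these to $u^k$, $e^u$, $\ln u$ via the equivalence transformations~(\ref{spich19}). Your only deviation --- differentiating (\ref{spich12}) with respect to $u$ to eliminate the inhomogeneous term $\xi_t^2$ and then freezing $(t,y)$ at a single point --- is a slightly tidier way of justifying the paper's step ``since $f^1$ depends on $u$ only, it must have the form\dots'' (the paper instead writes the general solution with a parameter-dependent ``constant'' $C(t,x,y)$ and then imposes $u$-only dependence), but it does not alter the structure of the argument.
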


\begin{table*}
\caption{\label{tab:1}Classification of functions $f^1$}
\begin{ruledtabular}
\begin{tabular}{p{20pt}p{100pt}p{150pt}p{100pt}}
No. & $f^1(u)$ &$\begin{array}{l} \text{Equivalence}\\ \text{transformations}\end{array}$ & $F^1(\bar {u})$ \\
\hline
1&$k_1 e^{k_2 u}+k_3$ ,
 &$ \bar {t}=t$,  $\bar {x}=x$, $\bar {u}=k_2 u$
 &$e^{\bar {u}}$\\
 &$ k_1 \ne 0$, $k_2 \ne 0$&$\bar {y}=\frac{1}{k_1 }y-\frac{k_3 }{k_1 }t$,&\\
2&$k_1 \ln (u+k_2 )+k_3$ ,
 &$ \bar {t}=t$, $\bar {x}=x$, $\bar {u}=u+k_2$
&$\ln \bar {u}$ \\
 &$ k_1 \ne 0$&$\bar {y}=\frac{1}{k_1 }y-\frac{k_3 }{k_1 }t$,&\\
3&$(k_1 u+k_2 )^m+k_3$ ,
&$\bar {t}=t$,\ $\bar {y}=y-k_3 t$,
&$\bar {u}^m$, \ $m\ne 0$\\
&$m\ne 0$,\ $k_1 \ne 0$
&$\bar {x}=x$,\ $\bar {u}=k_1 u+k_2$
&\\
\end{tabular}
\end{ruledtabular}
\end{table*}

{\bf Proof.} \textbf{Case (1)} $\alpha =0$, $\beta \ne 0$. Let us consider values $t$, $x$, $y$ as some parameters for equation (\ref{spich12}).
The general solution of equation (\ref{spich12}) has the form
\begin{eqnarray*}
\begin{aligned}
&f^1=C\exp{\left( {\frac{\xi _y^2 -2c_1 }{\beta }} u\right)}-\frac{\xi _t^2}{\xi _y^2 -2c_1 }\quad \text{if}\quad
\xi _y^2 -2c_1 \ne 0,\\
&\text{or}\quad f^1=\frac{\xi _y^2}{\beta }u+C\quad \text{if} \quad \xi _y^2 -2c_1 =0,
\end{aligned}
\end{eqnarray*}
where $C=C(t,x,y)$ is an arbitrary function. Because the function $f^1$ is dependent on $u$ only, it will be of the form
\begin{eqnarray*}
f^1=k_1 u+k_2 \quad (k_1 \ne 0), \quad \text{or}\quad
f^1=k_1 e^{k_2 u}+k_3 \quad (k_1 ,k_2 \ne 0),
\end{eqnarray*}
where $k_i$ $(i\in\{1,2,3\})$ are arbitrary constants.

\textbf{Case (2)} $\alpha \ne 0$. The general solution of equation (\ref{spich12}) has the form
\begin{eqnarray*}
\begin{aligned}
&f^1=C(\alpha u+\beta )^{\frac{\xi _y^2 -2c_1 }{\alpha }}-\frac{\xi _t^2 }{\xi_y^2 -2c_1 }
\quad \text{if} \quad \xi _y^2 -2c_1 \ne 0,\\
&\text{or}\quad f^1=\frac{\xi _t^2}{\alpha }\ln \left| {\alpha u+\beta } \right|+C\quad \text{if} \quad \xi _y^2 -2c_1 =0,
\end{aligned}
\end{eqnarray*}
where $C=C(t,x,y)$ are arbitrary function. As the function $f^1$ is dependent on~$u$ only, it will be of the form
\begin{eqnarray*}
f^1=(k_1 u+k_2 )^m+k_3,\quad \text{or}\quad f^1=k_1 \ln (u+k_2)+k_3 \quad (m,k_1 \ne 0),
\end{eqnarray*}
where $m$, $k_i$ $(i\in\{1,2,3\})$ are arbitrary constants.

Combining all obtained in the Cases 1, 2 forms of functions $f^1$ and using the set of equivalence transformations~(\ref{spich19})
we get simplified view $F^1$ of these functions (see Table~\ref{tab:1}). $\blacksquare$

\begin{center}
{\bf Classification of functions $f^2$}
\end{center}

For each simplified functions $f^1$ from Table~\ref{tab:1} we find all possible species of the function $f^2$
(the necessary conditions) under solving classifying equation~(\ref{spich13}). As before we impose the requirement that
the equation has solutions different from ``trivial'' one~(\ref{spich14}) which correspond to the algebra $A^{\ker}$.

\textbf{Case (1)} $f^1=e^u$. Substituting function $f^1=e^u$ in equation (\ref{spich12}) we obtain the relations
$\alpha =0$,\quad $\beta =\xi_y^2 -2c_1$ ,\quad $\xi _t^2 =0$.
So from relations (\ref{spich11}) we find that
\begin{eqnarray*}
\xi ^0=2c_1 t+c_0 ,\quad \xi^1=c_1 x+c_2 , \quad \xi^2=\xi^2(y),\quad \beta =\xi _y^2-2c_1.
\end{eqnarray*}

Then classifying equation (\ref{spich13}) has the form
\begin{eqnarray}\label{spich22}
(\xi_y^2-2c_1 )\dot{f}^2=-2c_1 f^2+\xi_{yy}^2 e^u.
\end{eqnarray}

1) Let $\xi_{yy}^2 \ne 0$. Differentiating both sides of equation (\ref{spich22}) by~$y$, we obtain that
$\xi_{yy}^2 \dot{f}^2=\xi _{yyy}^2 e^u$. Hence, the general solution has the form
\begin{eqnarray}\label{spich23}
f^2=k_1 e^u+k_2 ,
\end{eqnarray}
where $k_1$ and $k_2$ are arbitrary constants.

2) Let $\xi_{yy}^2 = 0$, i.e., $\xi^2=c_3 y+c_4$. Then equation (\ref{spich22}) has the form
\begin{eqnarray}\label{spich24}
(c_3-2c_1 )\dot{f}^2=-2c_1 f^2.
\end{eqnarray}
If $c_3-2c_1 =0$ then $c_1 \ne 0$ (otherwise Lie algebra coincides $A^{\ker}$) and $f^2 =0$, that falls into the previous \textbf{Case (1)} ($k_1 = k_2 = 0$). If $c_3-2c_1 \ne 0$ then general solution of equation (\ref{spich24}) has the form
\begin{eqnarray}\label{spich25}
f^2=k_1 e^{mu} ,
\end{eqnarray}
where $k_1$ and $m$ are arbitrary constants.

\textbf{Case (2)} $f^1=\ln u$. Substituting function $f^1=\ln u$ in equation (\ref{spich12}) we obtain the relations
$\alpha =\xi_t^2$,\quad $\beta =0$,\quad $\xi_y^2 =2c_1$.
So from relations (\ref{spich11}) we find that
\begin{eqnarray*}
\xi^0=2c_1 t+c_0 ,\quad \xi^1=c_1 x+c_2,\quad \xi^2=2c_1 y+q(t),\quad \alpha =q'(t).
\end{eqnarray*}
Then classifying equation (\ref{spich13}) has the form
\begin{eqnarray}\label{spich26}
q'(t)u\dot{f}^2=(q'(t)-2c_1)f^2+q''(t)u.
\end{eqnarray}

1) Let $q''(t) \ne 0$. Differentiating both sides of equation (\ref{spich26}) by $t$ and $u$, we have $\ddot{f}^2=q'''(q''u)^{-1}$.
Hence, the general solution has the form $f^2=k_1u\ln u+k_2u+k_3$, where $k_1$, $k_2$ and $k_3$ are arbitrary constants.
Substituting that solution into equation (\ref{spich26}), we obtain that $k_3=0$. So,
\begin{eqnarray}\label{spich27}
f^2=k_1u\ln u+k_2u.
\end{eqnarray}

2) Let $q''(t) = 0$. If $q'(t)\ne 0$ then from (\ref{spich26}) we have
\begin{eqnarray}\label{spich28}
f^2=k_1u^m,
\end{eqnarray}
where $k_1$ are an arbitrary constant. If $q'(t)= 0$ then from (\ref{spich26}) we have $f^2=0$ (because $c_1\ne 0$) that falls into case~(\ref{spich28})
when $k_1=0$.

\textbf{Case (3)} $f^1=u^m$, $m\ne 0, 1$. Substituting function $f^1=u^m$ in equation (\ref{spich12}) we obtain the relations
$\beta =\xi_t^2=0$,\quad $m\alpha =\xi_y^2 -2c_1$.
 So from relations (\ref{spich11}) we find that
$$ \xi ^0=2c_1 t+c_0 ,\quad \xi^1=c_1 x+c_2 ,\quad \xi^2=\xi^2(y),\quad \alpha =\frac{\xi _y^2-2c_1}{m}.$$
Then classifying equation (\ref{spich13}) has the form
\begin{eqnarray}\label{spich29}
 \frac{\xi _y^2-2c_1}{m}u\dot{f}^2=\left(\frac{\xi _y^2-2c_1}{m}-2c_1\right)f^2+\frac{\xi_{yy}^2}{m}u^{m+1}.
\end{eqnarray}

1) Let $\xi_{yy}^2 \ne 0$. Differentiating both sides of equation (\ref{spich29}) by $y$, we obtain that
\begin{eqnarray}\label{spich30}
 u\dot{f}^2=f^2+\frac{\xi_{yyy}^2}{\xi_{yy}^2}u^{m+1}.
\end{eqnarray}
Then the general solution of equation (\ref{spich30}) has the form
\begin{eqnarray}\label{spich31}
f^2=k_1u^{m+1}+k_2u,
\end{eqnarray}
where $k_1$ and $k_2$ are arbitrary constants.

2) Let $\xi_{yy}^2 = 0$, i.e. $\xi^2 = c_3y+c_4$. Then equation (\ref{spich29}) has the form
\begin{eqnarray}\label{spich32}
 \frac{c_3-2c_1}{m}u\dot{f}^2=\left(\frac{c_3-2c_1}{m}-2c_1\right)f^2.
\end{eqnarray}
If $c_3-2c_1=0$ then $c_1\ne 0$ (otherwise Lie algebra coincides with $A^{\ker}$) and $f^2=0$, that falls into the previous case~(\ref{spich31}).
If $c_3-2c_1 \ne 0$ then general solution of equation (\ref{spich32}) has the form
\begin{eqnarray}\label{spich33}
f^2=k_1u^n.
\end{eqnarray}

\textbf{Case (4)} $f^1=u$. Substituting function $f^1=u$ in equation (\ref{spich12}) and taking account (\ref{spich11}) we find relations
\begin{eqnarray}\label{spich34}
\xi ^0=2c_1 t+c_0 ,\quad \xi^1=c_1 x+c_2 ,\quad \xi^2=\xi^2(t,y),\quad \alpha =\xi _y^2-2c_1,\quad \beta =\xi^2_t.
\end{eqnarray}

1) Let $\alpha =\beta = 0$. In that case $c_1\ne 0$ (otherwise Lie algebra coincides with $A^{\ker}$). Then it is following from (\ref{spich13})
that $f^2=0$ what falls into the case (\ref{spich21}).

2) Let $\alpha = 0$, $\beta \ne 0$. From (\ref{spich34}) we obtain one of the possible general form of function $f^2$ for equation (\ref{spich13})
\begin{eqnarray*}
& f^2=\frac{\beta_y}{2\beta}u^2+ \frac{\beta_t-\beta_{xx}}{\beta}u+C \quad (c_1=0),\\
&f^2=C\exp \left(-\frac{2c_1}{\beta}u\right)+ \frac{\beta_y}{2c_1}u+\frac{2c_1\beta_t-\beta\beta_y}{4c_1^2} \quad (c_1\ne 0),
\end{eqnarray*}
where $C(t,x,y)$ are an arbitrary function.

As the function $f^2$ is dependent on $u$ only, it will be of the form
$$f^2=k_1u^2+ k_2u+k_3 \quad \text{or} \quad f^2=k_1\exp(k_2 u)+ k_3u+k_4 \quad (k_i\ne 0, i\in\{1,2\}),$$
where $k_i$ $(i\in\{1,2,3,4\})$ are arbitrary constants.

If substitute $f^2=k_1u^2+ k_2u+k_3$ when $k_1\ne 0$ into (\ref{spich13}), then the system has the solution which leads to the algebra $A^{\ker}$.
Also the function $f^2=k_1\exp(k_2 u)+ k_3u+k_4$ $(k_i\ne 0, i\in\{1,2\})$, $k_3^2+k_4^2\ne 0$ leads to the algebra $A^{\ker}$
for equation~(\ref{spich1}). So in this case the possible form of the function under consideration has the view{\samepage
\begin{eqnarray}\label{spich35}
f^2=k_1u+k_2, \quad f^2=k_1\exp(k_2 u),
\end{eqnarray}
where $k_i$ $(i\in\{1,2\})$ are arbitrary constants.}

3) Let $\alpha\ne 0$. Similarly as in the case 1) the analysis of equation (\ref{spich13}) leads to the following possible forms of the function $f^2$:
\begin{eqnarray}
&\label{spich36}f^2=k_3u^i\ln u+ k_2u^2+ k_1u+k_0, \quad i\in\{0,1,2\};\\
&\label{spich37}f^2=k_3u^m+ k_2u^2+ k_1u+k_0, \quad m\ne 0,1,2; \quad k_3\ne 0.
\end{eqnarray}
Analyzing of equation (\ref{spich13}) one can obtain the following results.
The general solution of that equation under functions (\ref{spich36}) with $k_3\ne 0$ coincides with (\ref{spich14}), what corresponds to the algebra $A^{\ker}$.

So, under $f^1=u$ one of the possible form of the function $f^2$ (up to redesignation of the constants $k_2$, $k_1$, $k_0$) is
\begin{eqnarray}\label{spich38}
f^2=k_1(u+k_2)^2+ k_3.
\end{eqnarray}
Further, if in function (\ref{spich37}) $m\ne 3$ and $k_2^2+k_1^2+k_0^2\ne 0$ then the general solution of equation
(\ref{spich13}) has ``trivial'' view (\ref{spich14}).
Consequently, another of the possible form of the function $f^2$ is
\begin{eqnarray}\label{spich39}
f^2=k_3u^m, \quad m\ne 0,1,2,3, \quad k_3\ne 0.
\end{eqnarray}
If in function (\ref{spich37}) $m=3$ then corresponding ``nontrivial'' solution of (\ref{spich13}) is only when
\begin{eqnarray}\label{spich40}
f^2=k_3u^3+3k_3k_2u^2+3k_3k_2^2u+k_3k_2^3=k_3(u+k_2)^3.
\end{eqnarray}
So, we combine:\\
functions $f^2$ (\ref{spich23}), (\ref{spich25}), corresponding the function $f^1=e^u$;\\
functions $f^2$ (\ref{spich27}), (\ref{spich28}), corresponding the function $f^1=\ln u$;\\
functions $f^2$ (\ref{spich31}), (\ref{spich33}), corresponding the function $f^1=u^m$ ($m\ne 0,1$);\\
functions $f^2$ (\ref{spich35}), (\ref{spich38}), (\ref{spich39}), (\ref{spich40}), corresponding the function $f^1=u$.\\
function $f^2=0$, corresponding an arbitrary function $f^1$ (see(\ref{spich20}), (\ref{spich21}) ).\\
As result we have the following possible forms of the functions $f^1$, $f^2$ given in the Table~\ref{tab:2} (columns two and tree).

Using the set of equivalence transformations (\ref{spich19}) which do not change the form of the functions $f^1$  from Table~\ref{tab:1}
we get simplified view $F^2$ of the obtained functions $f^2$ (see Table~\ref{tab:2},
column four).

\begin{table*}
\caption{\label{tab:2}Classification of functions $f^2$}
\begin{ruledtabular}
\begin{tabular}{lp{70pt}p{130pt}p{100pt}}
No. & $f^1(u)$ &$f^2(u)$ & $F^2(\bar {u})$ \\
\hline
1&$\forall$
&$0$ &$0$\\
2&$e^u$&$k_1e^u+k_2$, $k_1, k_2\ne 0$&$\varepsilon_1e^{\bar {u}}+\varepsilon_2$ \\
3&$e^u$ &$k_1e^{mu}$&$\tilde{\varepsilon}_1e^{m\bar {u}}$ \\
4&$\ln u$ &$k_1u^m$&$\tilde{\varepsilon}_1\bar {u}^m$ \\
5&$\ln u$ &$k_1u\ln u+k_2u$, $k_1\ne 0$&$\varepsilon_1\bar {u}\ln\bar {u} $ \\
6&$u^m$, $m\ne 0,1$ &$k_1u^n$&$\tilde{\varepsilon}_1\bar {u}^n$ \\
7&$u^m$, $m\ne 0,1$ &$k_1u^{m+1}+k_2u$, $k_1, k_2\ne 0$&$\varepsilon_1\bar {u}^{m+1}+\varepsilon_2\bar {u}$ \\
8&$u$ &$k_1e^{k_2u}$, $k_1, k_2\ne 0$&$\varepsilon_1e^{\bar {u}}$ \\
9&$u$ &$k_1u^m$, $k_1\ne 0$, $m\ne 0,1,2,3$&$\varepsilon_1\bar {u}^m$ \\
10&$u$ &$k_1(u+k_2)^3$, $k_1\ne 0$&$\varepsilon_1\bar {u}^3$ \\
11&$u$ &$k_1(u+k_2)^2+k_3$, $k_1\ne 0$&$\bar {u}^2+\tilde{\varepsilon}_1$ \\
12&$u$ &$k_1u+k_2$, $k_1\ne 0$&$\varepsilon_1\bar {u}$ \\
13&$u$ &$k_1$&$\tilde{\varepsilon}_1$ \\
\end{tabular}

Here $\varepsilon_i=\pm 1$, $\tilde{\varepsilon}_i\in \{-1,0,1\}$, $i\in \{1,2\}$.
\end{ruledtabular}
\end{table*}

\newpage
\begin{center}
{\bf Symmetry classification}
\end{center}

If an equation of the form (\ref{spich1}) admits Lie algebra $A^{\max }\supset A^{\ker}$, then it belongs to one
of subclasses listed in the Table~\ref{tab:2}. To get the standard group classification of the equation (\ref{spich1}) now
we need to find all possible Lie symmetries of each of those equations.
For this we substitute each pair of the functions $f^1(u)$, $f^2(u)=F^2(u)$ from the Table~\ref{tab:2} (columns two and four) to equations
(\ref{spich12}), (\ref{spich13}) under different values of $\varepsilon_i$, $\tilde{\varepsilon}$ and find general view of infinitesimal
operator (\ref{spich2}), (\ref{spich11}).

As result we obtain the following theorem.

\begin{theorem}\label{T5}
Any equation of the form \eqref{spich1}which is invariant under Lie algebra $A^{\max }\supset A^{\ker}$ is mapped
by an equivalence  transformations group $G^{\rm equiv}$ of the form \eqref{spich19} to one of the equations which are presented by
functions $f^1$ and $f^2$ in Table~\ref{tab:3} (columns two and three). The last column of the Table
presents generators of an algebra $A^{\max}$ that do not belong to the algebra $A^{\ker}$.
\end{theorem}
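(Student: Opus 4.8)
The plan is to turn the classification into a finite case analysis. By Theorem~\ref{T4} together with the subsequent classification of $f^2$, every equation of the form \eqref{spich1} admitting an algebra $A^{\max}\supsetneq A^{\ker}$ has, up to a $G^{\rm equiv}$-transformation \eqref{spich19}, a pair $(f^1,f^2)$ drawn from the finitely many lines of Table~\ref{table2} (columns two and four). Thus it suffices to take each such pair, determine the full solution set of the determining equations \eqref{spich12}, \eqref{spich13} under the constraints \eqref{spich11}, read off the operators \eqref{spich2} that do not already lie in $A^{\ker}=\langle\partial_t,\partial_x,\partial_y\rangle$, and then normalise the remaining free parameters $\varepsilon_i$, $\tilde{\varepsilon}_i$ by the residual equivalence group.

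For a fixed pair $(f^1,f^2)$ the system \eqref{spich12}, \eqref{spich13} is no longer a classifying system but an ordinary linear overdetermined system for the coefficient functions $\xi^2(t,y)$, $\alpha(t,y)$, $\beta(t,y)$ together with the constants $c_0,c_1,c_2,c_3$ entering \eqref{spich11}. First I would substitute the explicit $f^1$ (one of $e^{\bar u}$, $\ln\bar u$, $\bar u^m$, $\bar u$) and $f^2$, and split each equation with respect to the independent functional forms of $u$ (powers, exponentials, logarithms, and their products). The splitting collapses \eqref{spich12}--\eqref{spich13} to a small set of linear ODEs for $\xi^2$ in $t$ and $y$ together with algebraic relations among the $c_i$; integrating these yields a finite-dimensional solution space whose basis, after subtracting the $A^{\ker}$ part, furnishes the extra generators listed in the last column of Table~\ref{table3}. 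The appearance of additional operators (the dilation $2t\partial_t+x\partial_x+2y\partial_y$ and its relatives, Galilean-type shifts in $y$, and the projective operators that arise for the distinguished exponents) is dictated entirely by which algebraic relations among the $c_i$ become vacuous for special values of the parameters $m$, $n$.

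The bulk of the work is this per-row computation, repeated for every admissible sign and exponent. The main obstacle is the bookkeeping of degenerate subcases: certain exponents (for instance $n=m+1$ in the power nonlinearity, $m=2$ or $m=3$ for $f^1=u$, and the logarithmic borderline cases) cause extra relations to drop out and thereby enlarge the algebra, so these values must be separated and treated on their own, while the generic values give the minimal four-dimensional extension. One must verify in each branch that the enlargement is exactly the one recorded and not larger. Finally I would deploy the continuous equivalence transformations \eqref{spich19}, supplemented where necessary by the additional equivalences noted after Theorem~\ref{T3}, to fix the constants $\varepsilon_i=\pm1$ and $\tilde{\varepsilon}_i\in\{-1,0,1\}$ to their canonical values and to amalgamate any lines of Table~\ref{table2} that yield $G^{\rm equiv}$-equivalent equations; checking that the surviving rows of Table~\ref{table3} are pairwise inequivalent completes the classification.
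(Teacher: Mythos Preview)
Your proposal is correct and matches the paper's approach essentially verbatim: the paper states that one substitutes each pair $(f^1,f^2)$ from Table~\ref{table2} into the classifying equations \eqref{spich12}, \eqref{spich13} under the constraint \eqref{spich11}, solves for the coefficients of the infinitesimal operator \eqref{spich2} case by case (separating the degenerate parameter values), and reads off the extra generators listed in Table~\ref{table3}. Your final remark about verifying pairwise $G^{\rm equiv}$-inequivalence of the rows of Table~\ref{table3} is slightly more than the paper spells out in its proof sketch, but it is indeed implicit in the theorem's claim and is confirmed in the paper's conclusion.
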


\begin{table*}
\caption{\label{tab:3}Group classification of equations \eqref{spich19} with respect to the $G^{\rm equiv}$}
\begin{ruledtabular}
\begin{flushleft}
\begin{tabular}{p{15pt}p{68pt}p{90pt}p{200pt}}
No. & $f^1(u)$ &$f^2(u)$ &Generators from $A^{\max}/A^{\ker}$ \\
\hline
1&$
\forall \ne {\rm const}
$
&$0$ &$t\partial_t +\frac{1}{2}x\partial_x +y\partial_y $\\
2&
$e^u$&
$\varepsilon_1 e^{mu},\  m\ne 0, 1$&
$t\partial_t +\frac{1}{2}x\partial_x +\frac{m-1}{m}y\partial_y -\frac{1}{m}\partial_u $\\
3&
$e^u$&
$\varepsilon_1 e^{u}+\varepsilon_2$&
$e^{\varepsilon_1 y}(\partial_y +\varepsilon_1\partial_u)$\\
4&
$e^u$&
$\varepsilon_1$&
$y\partial_y +\partial_u$\\
5&
$e^u$&
$\varepsilon_1 e^u$&
$t\partial_t +\frac{1}{2}x\partial_x -\partial_u,\quad e^{\varepsilon_1 y}(\partial_y +\varepsilon_1\partial_u)$\\
6&
$e^u$&
$0$&
$t\partial_t +\frac{1}{2}x\partial_x +y\partial_y,\quad y\partial_y +\partial_u$\\
7&
$\ln u$&
$\varepsilon_1 u^m, \ m\ne 1$&
$t\partial_t +\frac{1}{2}x\partial_x +(y-\frac{t}{m-1}\partial_y)-\frac{u}{m-1}\partial_u$\\
8&
$\ln u$&
$\varepsilon_1 u\ln u$&
$e^{\varepsilon_1 t}(\partial_y +\varepsilon_1u\partial_u)$\\
9&
$\ln u$&
$\varepsilon_1 u$&
$t\partial_t +\frac{1}{2}x\partial_x +(y+\frac{\varepsilon_1}{2}t^2\partial_y)+\varepsilon_1tu\partial_u,\quad t\partial_y$\\
10&
$\ln u$&
$0$&
$t\partial_t +\frac{1}{2}x\partial_x +y\partial_y,\quad t\partial_y+u\partial_u$\\
11&
$u^m \ m\ne 0,1$&
$\varepsilon_1u^n \ \ne m+1$&
$(n-1)t\partial_t +\frac{1}{2}(n-1)x\partial_x +(n-m-1)y\partial_y$\\
12&$u^m \ m\ne 0,1$&
$\varepsilon_1u^{m+1}+\varepsilon_2u$&
$e^{\varepsilon_1 my}(\partial_y +\varepsilon_1u\partial_u)$\\
13&
$u^m \ m\ne 0,1$&
$\varepsilon_1u^{m+1}$&
$t\partial_t +\frac{1}{2}x\partial_x -\frac{1}{m}u\partial_u,\quad e^{\varepsilon_1 my}(\partial_y +\varepsilon_1u\partial_u)$\\
14&$u^m \ m\ne 0,1$&
$0$&
$ t\partial_t +\frac{1}{2}x\partial_x +y\partial_y,\quad y\partial_y + \frac{1}{m}u\partial_u$\\
15&
$u$&
$\varepsilon_1e^u$&
$t\partial_t +\frac{1}{2}x\partial_x +(y-t)\partial_y-\partial_u$\\
16&
$u$&
$\varepsilon_1u^m \ m\ne 0,1,2$&
$t\partial_t +\frac{1}{2}x\partial_x +\frac{m-2}{m-1}y\partial_y-\frac{u}{m-1}\partial_u$\\
17&
$u$&
$u^2+1$&
$\begin{array}{l}
e^y(\sin t\partial_y+(\cos t+u\sin t)\partial_u),\\
e^y(\cos t\partial_y-(\sin t-u\cos t)\partial_u)
 \end{array}$\\
18&
$u$&
$u^2-1$&
$\begin{array}{l}
e^y(\sinh t\partial_y+(\cosh t+u\sinh t)\partial_u),\\
e^y(\cosh t\partial_y+(\sinh t+u\cosh t)\partial_u)
 \end{array}$\\
19&
$u$&
$\varepsilon_1u$&
$y\partial_y +u\partial_u,\quad e^{\varepsilon_1 t}(\partial_y +\varepsilon_1\partial_u)$\\
20&
$u$&
$u^2$&
$\begin{array}{l}
 t\partial_t +\frac{1}{2}x\partial_x -u\partial_u,\quad e^y(\partial_y +u\partial_u),\\
e^y(t\partial_y +(tu+1)\partial_u)
 \end{array}$\\
21&
$u$&
$\varepsilon_1$&
$\begin{array}{l}
 t\partial_t +\frac{1}{2}x\partial_x +\varepsilon_1t^2\partial_y+(2\varepsilon_1t-u)\partial_u,\\
 t\partial_y+\partial_u,\quad (y-\frac{1}{2}\varepsilon_1t^2)\partial_y+(u-\varepsilon_1t)\partial_u
 \end{array}$\\
22&
$u$&
$0$&
$t\partial_t +\frac{1}{2}x\partial_x +y\partial_y,\quad t\partial_y+\partial_u,\quad y\partial_y+u\partial_u$\\
\end{tabular}
\end{flushleft}

Here $\varepsilon_i=\pm 1$, $i\in \{1,2\}$.
\end{ruledtabular}
\end{table*}

\section{The additional equivalence transformations}
In this section we shall demonstrate the efficiency of using additional equivalence transformations for simplification of
nonlinear Kolmogorov equations arising in Table~\ref{tab:3} \cite{stog21, stog22, stog23, stog24}. The ways of finding additional equivalence
transformations is based on the fact that equivalent equations have the same dimensions of their invariance algebras $A^{\max}$.
To find additional equivalence transformations which reduced equation (\ref{spich1}) to another equation of same form use
conditions (\ref{spich16})--(\ref{spich18}). From~(\ref{spich17}) it is easy to derive that locally equivalent equations
from Table~\ref{tab:3} must have same function $f^1(u)$. So, it is easy to go through all the pairs of equations in Table~\ref{tab:3},
for which the dimensions of corresponding algebras and functions~$f^1(u)$ are the same and verify their equivalence.

Let us give some examples how to find such pairs of equivalent equations.
\begin{center}
{\bf Six-dimensional invariance algebras}
\end{center}

1) Let us consider the cases 21 and 22 of the equations from Table~\ref{tab:3}, which admit six-dimensional invariance algebras. Let us construct the
additional equivalence transformations reducing equation
\begin{eqnarray}\label{spich41}
u_t -u_{xx} +uu_y =\varepsilon_1,
\end{eqnarray}
to the equation
\begin{eqnarray}\label{spich42}
u_t -u_{xx} +uu_y =0.
\end{eqnarray}
Taking into account (\ref{spich16}) conditions (\ref{spich17}) and (\ref{spich18}) take the forms
\begin{eqnarray}\nonumber
Y_t+Y_yu=a_1^2(\theta u+\nu),\quad \theta_tu+\nu_t+(\theta_yu+\nu_y)u+\theta\varepsilon_1=0 .
\end{eqnarray}

Since the $\theta$ and $\nu$ do not depend on the variable $u$ we immediately arrive at
\begin{eqnarray*}
Y_t=a_1^2\nu,\quad Y_y=a_1^2\theta,\quad \theta_y=0,\quad \theta_t+\nu_y=0,\quad \nu_t+\theta\varepsilon_1=0 .
\end{eqnarray*}
Solutions of that system have the form
\begin{eqnarray}\label{spich43}
Y=C_1a_1^2y-\frac{C_1a_1^2\varepsilon_1}{2}t^2+C_2a_1^2t+C_3,\quad \theta=C_1,\quad \nu=-C_1\varepsilon_1t+C_2,
\end{eqnarray}
where $C_1$, $C_2$, $C_3$ are arbitrary constants, $C_1a_1\ne 0$.

Taking into account (\ref{spich43}) we get relevant additional equivalence transformations
\begin{eqnarray}\nonumber
\bar {t} = t,\quad \bar {x} = x,\quad \bar {y} = y-\frac{\varepsilon_1}{2}t^2,\quad \bar {u} = u-\varepsilon_1t,
\end{eqnarray}
which reducing equation (\ref{spich41}) to the equation (\ref{spich42}) (omitting the top line for new variables).

2) Similarly, the case 20 of the equation from Table~\ref{tab:3}
\begin{eqnarray}\label{spich44}
u_t -u_{xx} +uu_y =u^2
\end{eqnarray}
is reduced to the case 22 by the additional equivalence transformations
\begin{eqnarray*}
\bar {t} = t,\quad \bar {x} = x,\quad \bar {y} = -e^{-y},\quad \bar {u} = e^{-y}u.
\end{eqnarray*}

\begin{center}
{\bf Five-dimensional invariance algebras}
\end{center}
There are $9$ equations admitting five-dimensional invariance algebra. It turns out that only five among them are not locally equivalent.

3) We consider the cases 18 and 19 of the equations from Table~\ref{tab:3}, which admit five-dimensional invariance algebras. Let us construct the
additional equivalence transformations reducing equation
\begin{eqnarray}\label{spich45}
u_t -u_{xx} +uu_y =u^2-1
\end{eqnarray}
to the equation
\begin{eqnarray}\label{spich46}
u_t -u_{xx} +uu_y =u.
\end{eqnarray}
The corresponding conditions (\ref{spich17}) and (\ref{spich18})
\begin{eqnarray*}
Y_t+Y_yu=a_1^2(\theta u+\nu),\quad \quad \theta_tu+\nu_t+(\theta_yu+\nu_y)u+\theta(u^2-1)=a_1^2(\theta u+\nu)
\end{eqnarray*}
have a solution, which gives the following relevant additional equivalence transformations
\begin{eqnarray*}
\bar {t} = 2t,\quad \bar {x} = \sqrt{2}x,\quad \bar {y} = -2e^{t-y},\quad \bar {u} = e^{t-y}u-e^{t-y}.
\end{eqnarray*}
These transformations reduces equation (\ref{spich45}) to the equation (\ref{spich46}).

4) Let us consider the cases 17 and 19 of the equations from Table~\ref{tab:3}. It turns out that these equations are not equivalent. Assume the
contrary. Then, similarly to the previous case 3 we obtain the following equation (\ref{spich17}) and (\ref{spich18}) to search for equivalence
transformations:
\begin{eqnarray*}
Y_t+Y_yu=a_1^2(\theta u+\nu),\quad \theta_tu+\nu_t+(\theta_yu+\nu_y)u+\theta(u^2+1)=\varepsilon_1a_1^2(\theta u+\nu).
\end{eqnarray*}
Splitting the two equations in powers, we obtain the system of equations for functions $\theta$, $\nu$:
\begin{eqnarray*}
\theta_t=\nu_y,\quad \theta_y+\theta=0,\quad \theta_t+\nu_y=\varepsilon_1a_1^2\theta,\quad \nu_t+\theta=\varepsilon_1a_1^2\nu.
\end{eqnarray*}
Solving these equations consequentially, we arrive to the contradictory condition $a_1^4=-1$.

Acting in a similar way, applying conditions (\ref{spich17})--(\ref{spich18}), we have identified all others pairs of
equivalent equations from Table~\ref{tab:3} invariant under five- and four-dimensional algebras.
In the Table~\ref{tab:4} we give the functions $f^1$, $f^2$ and $F^1$, $F^2$ describing that pairs as well as
corresponding the additional equivalence transformations.

\begin{table*}
\caption{\label{tab:4}Additional equivalence transformations.}\vspace{3mm}
\begin{tabular}{p{20pt}p{55pt}p{60pt}p{160pt}p{50pt}p{40pt}}
\hline\\[-7mm]
\hline
no. & $f^1(u)$ &$f^2(u)$ & $\begin{array}{@{}c@{}}\text{Equivalence}\\[-3mm] \text{transformations}\end{array}$ &$F^1(\bar{u})$&$F^2(\bar{u})$ \\
\hline
1&$e^u$ &$\varepsilon_1e^u+\tilde\varepsilon_2$ &$\bar{t}=t$;\  $\bar{y}=-\varepsilon_1e^{-\varepsilon_1y}$;
 &$e^{\bar{u}}$&$\tilde\varepsilon_2$\\
  &&&$\bar{x}=x;$\  $\bar{u}=u-\varepsilon_1y$&&\\
2&$\ln u$& $\varepsilon_1u$&$\bar{t}=t$;\ $\bar{y}=y-\frac{\varepsilon_1}{2}t^2$;
 &$\ln \bar{u}$&$0$ \\
  &&&$\bar{x}=x$;\  $\bar{u}=e^{-\varepsilon_1t}u$&&\\
3&$u^m$,&
$\varepsilon_1u^{m+1}+$
 &$\bar{t}=t$;\  $\bar{y}=-\frac{\varepsilon_1}{m}e^{-\varepsilon_1my}$;
&$\bar{u}^m$&$\tilde\varepsilon_2\bar{u}$\\
 &$m\ne 0,1$&$\tilde\varepsilon_2u$& $\bar{x}=x$;\  $\bar{u}=e^{-\varepsilon_1y}u$&&\\
4&$u$ &$\varepsilon_1$&$\bar{t}=t$;\  $\bar{y}=y-\frac{\varepsilon_1}{2}t^2;$
 &$\bar{u}$&$0$\\
 &&&$\bar{x}=x;$\  $\bar{u}=u-\varepsilon_1$&&\\
5&$u$&$u^2$&$\bar{t}=t$;\  $\bar{y}=-e^{-y}$;
 &$\bar{u}$&$0$\\
  &&&$\bar{x}=x$;\  $\bar{u}=e^{-y}u$&&\\
6&$u$&$u^2-1$&$\bar{t}=2t$;\  $\bar{x}=\sqrt{2}x$; $\bar{y}=-2e^{t-y}$;
&$\bar{u}$&$\bar{u}$\\
&&&$\bar{u}=e^{t-y}u-e^{t-y}$&&\\
\hline\\[-7mm]
\hline
\end{tabular}

\vspace{2mm}
Here $\varepsilon_1=\pm 1$, $\tilde{\varepsilon}_2\in \{-1,0,1\}$.
\end{table*}

Finally, we exclude locally equivalent equations from Table~\ref{tab:3} using Table~\ref{tab:4} and formulate the theorem.

\begin{theorem}\label{T6}
All possible equations of the form \eqref{spich1} admitting Lie algebras $A^{\max }\supset A^{\ker}$ of symmetries
are reduced to one of the~$14$ ``canonical'' equations with functions given in Table~{\rm \ref{tab:5}} by an equivalence transformations
from $G^{\rm equiv}$ and the relevant additional
equivalence transformations from Table~{\rm \ref{tab:4}}. The maximal algebras of invariance of these ``canonical'' equations~\eqref{spich1} are presented in the fourth column of Table~{\rm \ref{tab:5}}.
\end{theorem}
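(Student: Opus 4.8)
The plan is to treat Theorem~\ref{T6} as the bookkeeping step that collapses the list produced by Theorem~\ref{T5}. By Theorem~\ref{T5}, every equation of the form~\eqref{spich1} with $A^{\max}\supset A^{\ker}$ is $G^{\rm equiv}$-equivalent to one of the $22$ entries of Table~\ref{table3}, so it suffices to decide, for each pair of those entries, whether they are linked by a further (additional) equivalence transformation of the type described in Theorem~\ref{T2}. First I would record the two invariants preserved by any such transformation: the dimension $\dim A^{\max}$, which is read off from the generator column of Table~\ref{table3}, and, by~\eqref{spich17}, the function $f^1$ (since $Y$ is independent of $u$, relation~\eqref{spich17} forces locally equivalent equations to share the same $f^1$). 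This splits the $22$ cases into the dimension-$6$ block (cases 20--22), the dimension-$5$ block (the nine cases 5, 6, 9, 10, 13, 14, 17, 18, 19), and the dimension-$4$ block (the remaining ten), and equivalences can occur only inside one block and one $f^1$-class, reducing the task to a short finite list of candidate pairs.

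For each candidate pair I would substitute the two function pairs $(f^1,f^2)$ and $(F^1,F^2)$ into the defining relations~\eqref{spich16}--\eqref{spich18}, use that $\theta,\nu,Y$ do not depend on $u$ to split~\eqref{spich17}--\eqref{spich18} in powers of $u$, and thereby obtain a small closed system of PDEs for $Y(t,y),\theta(t,y),\nu(t,y)$ together with the constants $a_1,a_2,a_3$. Integrating this system consecutively either yields an explicit admissible transformation (with $a_1\ne0$, $Y_y\ne0$, $\theta\ne0$) or forces an impossible constraint. The worked examples already carry this out for the representative pairs: cases 21 and 20 are each driven into case 22 in the six-dimensional block, and case 18 is driven into case 19 in the five-dimensional block, while the computation for the pair 17, 19 terminates in the contradiction $a_1^4=-1$, certifying their inequivalence. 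I would run the identical calculation over the remaining candidate pairs; the successful reductions are precisely those tabulated in Table~\ref{table4}.

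Collecting the outcomes then gives the count. In the six-dimensional block the three equations fuse into the single representative, case 22. In the five-dimensional block the four reductions $5\to6$, $9\to10$, $13\to14$, $18\to19$ (rows 1 and 3 of Table~\ref{table4} taken with $\tilde\varepsilon_2=0$, together with rows 2 and 6) leave exactly the five survivors 6, 10, 14, 17, 19, matching the assertion that only five of the nine are inequivalent. In the four-dimensional block the two reductions $3\to4$ and $12\to11$ (rows 1 and 3 of Table~\ref{table4} taken with $\tilde\varepsilon_2=\pm1$) leave eight survivors. Hence $1+5+8=14$ canonical equations remain, and their maximal invariance algebras are simply those listed against the surviving representatives in Table~\ref{table3}.

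The part I expect to be genuinely delicate, rather than routine, is the negative direction: proving that no additional transformation links two given canonical forms. Exhibiting an equivalence only requires displaying one solution of~\eqref{spich16}--\eqref{spich18}, but ruling one out means showing the split system is inconsistent for every real $a_1\ne0$, and the constraints couple $\theta$, $\nu$, $Y$ in a way (as in the $a_1^4=-1$ obstruction for the pair 17, 19) where the whole chain must be integrated before the contradiction surfaces. Ensuring that every such non-equivalence in the five- and four-dimensional blocks has been checked, so that the figure $14$ is not an over-identification, is where the real care is required.
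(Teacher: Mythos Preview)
Your proposal is correct and follows essentially the same route as the paper: start from the $22$ cases of Theorem~\ref{T5}, use the two invariants $\dim A^{\max}$ and $f^1$ (the paper also derives the latter from~\eqref{spich17}) to cut the comparison down to a finite list of candidate pairs, then solve the split system coming from~\eqref{spich16}--\eqref{spich18} for each pair to either produce the additional equivalence (collected as Table~\ref{table4}) or a contradiction (as in the $a_1^4=-1$ obstruction for $17$ vs.\ $19$). Your explicit bookkeeping $1+5+8=14$ and the identification of the surviving representatives match Table~\ref{table5} exactly; the paper presents the same worked examples ($20,21\to22$; $18\to19$; $17\not\sim19$) and then states that the remaining pairs are handled ``in a similar way,'' so your acknowledged concern about verifying all the non-equivalences is precisely the omitted-but-routine portion of the paper's own argument.
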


\begin{table*}
\caption{\label{tab:5}Group classification of generalized nonlinear Kolmogorov type equations with respect to the additional
equivalence transformations which do not belong to $G^{\rm equiv}$}
\begin{ruledtabular}
\begin{flushleft}
\begin{tabular}{p{15pt}p{68pt}p{90pt}p{200pt}}
No. & $f^1(u)$ &$f^2(u)$ &Generators from $A^{\max}/A^{\ker}$ \\
\hline
1&$
\forall \ne {\rm const}
$
&$0$ &$t\partial_t +\frac{1}{2}x\partial_x +y\partial_y $\\
2&
$e^u$&
$\varepsilon_1 e^{mu},\  m\ne 0, 1$&
$t\partial_t +\frac{1}{2}x\partial_x +\frac{m-1}{m}y\partial_y -\frac{1}{m}\partial_u $\\
3&
$e^u$&
$\varepsilon_1$&
$y\partial_y +\partial_u$\\
4&
$e^u$&
$0$&
$t\partial_t +\frac{1}{2}x\partial_x +y\partial_y,\quad y\partial_y +\partial_u$\\
5&
$\ln u$&
$\varepsilon_1 u^m, \ m\ne 1$&
$t\partial_t +\frac{1}{2}x\partial_x +(y-\frac{t}{m-1}\partial_y)-\frac{u}{m-1}\partial_u$\\
6&
$\ln u$&
$\varepsilon_1 u\ln u$&
$e^{\varepsilon_1 t}(\partial_y +\varepsilon_1u\partial_u)$\\
7&
$\ln u$&
$0$&
$t\partial_t +\frac{1}{2}x\partial_x +y\partial_y,\quad t\partial_y+u\partial_u$\\
8&
$u^m \ m\ne 0,1$&
$\varepsilon_1u^n \ n\ne m+1$&
$(n-1)t\partial_t +\frac{1}{2}(n-1)x\partial_x +(n-m-1)y\partial_y$\\
9&$u^m \ m\ne 0,1$&
$0$&
$ t\partial_t +\frac{1}{2}x\partial_x +y\partial_y,\quad y\partial_y + \frac{1}{m}u\partial_u$\\
10&
$u$&
$\varepsilon_1e^u$&
$t\partial_t +\frac{1}{2}x\partial_x +(y-t)\partial_y-\partial_u$\\
11&
$u$&
$\varepsilon_1u^m \ m\ne 0,1,2$&
$t\partial_t +\frac{1}{2}x\partial_x +\frac{m-2}{m-1}y\partial_y-\frac{u}{m-1}\partial_u$\\
12&
$u$&
$u^2+1$&
$\begin{array}{l}
e^y(\sin t\partial_y+(\cos t+u\sin t)\partial_u),\\
e^y(\cos t\partial_y-(\sin t-u\cos t)\partial_u)
 \end{array}$\\
13&
$u$&
$\varepsilon_1u$&
$y\partial_y +u\partial_u,\quad e^{\varepsilon_1 t}(\partial_y +\varepsilon_1\partial_u)$\\
14&
$u$&
$0$&
$t\partial_t +\frac{1}{2}x\partial_x +y\partial_y,\quad t\partial_y+\partial_u,\quad y\partial_y+u\partial_u$\\
\end{tabular}
\end{flushleft}

Here $\varepsilon_1=\pm 1$.
\end{ruledtabular}
\end{table*}

\section{\label{6}Exact solutions}

\begin{center}
{\bf One-dimensional invariance subalgebras}
\end{center}
It is known that if a nonlinear differential equation with partial derivatives has nontrivial symmetry properties,
it allows us to use differential operators of the invariance algebra for its symmetry reduction \cite{stog3, stog2}.

We consider the use of the operators of the invariance algebra for symmetric reduction and the construction of
exact solutions on the example of the nonlinear Kolmogorov equation (\ref{spich1}) with the maximal six-dimensional invariance algebra.
In Table~\ref{tab:3}, we have three equations (cases 20, 21, 22) which have a six-dimensional invariance algebra.

Using the additional equivalence transformations from Table~\ref{tab:4}, we can reduce the cases 20 and 21 from Table~\ref{tab:3} to the case 22, i.e., we arrive at the equation
\begin{eqnarray}\label{spich47}
u_t -u_{xx} +uu_y =0.
\end{eqnarray}
Let us note that the equation (\ref{spich47}) are well investigated and that most of the exact solutions given below have been constructed before
\cite{stog18,stog15}.

Consider the equation
\begin{eqnarray}\label{spich48}
u_t -u_{xx} -uu_y =0,
\end{eqnarray}
which occurs in many problems of financial mathematics \cite{stog1}.\\
Applying the equivalence transformation
\begin{eqnarray*}
t\rightarrow t,\quad x\rightarrow x,\quad y\rightarrow -y,\quad u\rightarrow u,
\end{eqnarray*}
to equation (\ref{spich47}) and to the operators of the invariance algebra from 22 of Table~\ref{tab:3} we get the equation (\ref{spich48}),
which is invariant with respect to the six-dimensional invariance algebra whose operators have the form:
\begin{eqnarray*}
& P_0=\partial_t,\quad P_1=\partial_x,\quad P_2=\partial_y,\quad D_1=t\partial_t+\tfrac{1}{2}x\partial_x+y\partial_y,\\
&D_2=y\partial_y+u\partial_u,\quad G=t\partial_y-\partial_u .
\end{eqnarray*}
Consider the algebra with basis operators
\begin{eqnarray}\label{spich48a}
\begin{aligned}
&X_1=D_1-D_2=t\partial_t+\tfrac{1}{2}x\partial_x-u\partial_u,\\
&X_2=P_0=\partial_t,\quad X_3=D_2=y\partial_y+u\partial_u,\\
&X_4=P_2=\partial_y,\quad X_5=G=t\partial_y-\partial_u,\quad X_6=P_1=\partial_x,\\
\end{aligned}
\end{eqnarray}
which satisfy such non-zero commutation relations:
\begin{eqnarray*}
& [X_1,X_2]=-X_2,\quad [X_1,X_5]=X_5,\quad [X_1,X_6]=-\tfrac{1}{2}X_6,\\
&  [X_3,X_5]=-X_5,\quad [X_2,X_5]=X_4,\quad [X_3,X_4]=-X_4.
 \end{eqnarray*}

Further use of nontrivial symmetry properties of equation (\ref{spich48}) requires a classification of subalgebras of the algebra.
Using the well-known method of classifying subalgebras of a Lie algebra (see, for example, \cite{stog20,stog26}), we classify all
one-dimensional and two-dimensional Lie subalgebras of the algebra (\ref{spich48a}) up to the action
of transformations from the group of its inner automorphisms to which there corresponds a nontrivial reduction of equation (\ref{spich48})
to partial differential equations and ordinary differential equations, respectively. Thus, up to those action,
one-dimensional subalgebras are exhausted by such algebras:
\begin{eqnarray}\label{spich49}
\begin{aligned}
&\langle X_1\rangle,\quad \langle X_2\rangle,\quad \langle X_3\rangle,\quad \langle X_4\rangle,\quad \langle X_5\rangle,\quad
\langle X_6\rangle,\quad \langle X_1\pm X_4\rangle,\\
&\langle X_1+X_3\pm X_5\rangle,\quad \langle X_1+\alpha X_3\rangle,\quad \langle X_2\pm X_3\rangle,\quad \langle X_3+X_6\rangle,\\
&\langle X_2\pm X_3\pm X_6\rangle,\quad \langle X_2\pm X_5+\alpha X_6\rangle,\quad \langle X_2+X_6\rangle,\quad \langle X_2\pm X_5\rangle,\\
&\langle X_5+X_6\rangle,\quad \langle X_4+X_6\rangle,\quad (\alpha\ne 0).\\
\end{aligned}
\end{eqnarray}
Using these subalgebras, one can reduce equation (\ref{spich48}) to two-dimensional partial differential equations. For example, consider
the last operator from the list~(\ref{spich49})
\begin{eqnarray}\label{spich50}
\langle X_4+X_6\rangle=\partial_x+\partial_y.
\end{eqnarray}
Proceeding from the complete set of invariants of the operator (\ref{spich50})
\begin{eqnarray*}
I_1=t,\quad I_2=y-x,\quad I_3=u,
\end{eqnarray*}
we seek an invariant solution in the form
\begin{eqnarray}\label{spich51}
u=\varphi(\omega_1,\omega_2),\quad \omega_1=t,\quad \omega_2=y-x.
\end{eqnarray}
Substituting (\ref{spich51}) into equation (\ref{spich48}), we obtain the well-known Burgers equation
\begin{eqnarray*}
\varphi_{\omega_1}-\varphi_{\omega_2\omega_2}-\varphi\varphi_{\omega_2}=0 .
\end{eqnarray*}
In \cite{stog1}, the Cauchy problem for equation (\ref{spich48}) is examined
\begin{eqnarray}\label{spich52}
u_t -u_{xx} -uu_y =0,\quad u(0,x,y)=g(x,y).
\end{eqnarray}
If we require the solution of equation (\ref{spich48}) to be invariant with respect to the operator (\ref{spich50}),
and the function was of the form $g=g(\omega_2)$ then the solution of the Cauchy problem
\begin{eqnarray*}
\varphi_{\omega_1}-\varphi_{\omega_2\omega_2}-\varphi\varphi_{\omega_2}=0,\quad
\varphi({0,\omega_2})=g(\omega_2),
\end{eqnarray*}
has the form (see, for example, \cite{stog25})
\begin{eqnarray*}
 \varphi_({\omega_1,\omega_2})=2\frac{\partial}{\partial\omega_2}\ln G(\omega_1,\omega_2) ,
\end{eqnarray*}
where
\begin{eqnarray*}
 G(\omega_1,\omega_2)=\frac{1}{\sqrt{4\pi \omega_1}}\int_{-\infty}^\infty\exp
\left[-\frac{(\omega_2-\xi)^2}{4\omega_1}-\frac{1}{2}\int_0^\xi g(\tau)d\tau\right]d\xi ,
\end{eqnarray*}
and leads to the solution (\ref{spich51}) of the Cauchy problem (\ref{spich52}).

\begin{center}
{\bf Two-dimensional invariance subalgebras}
\end{center}
Let us employ the symmetries to reduce equation (\ref{spich48}) to various ordinary differential equations. For this
we classify all two-dimensional subalgebras of the algebra $L_6$ (\ref{spich48a}) up to the action of transformations of the group of its inner
automorphisms. Wherein $49$ two-dimensional subalgebras are obtained. Among the obtained two-dimensional subalgebras, the most interesting
from our point of view are the following, the corresponding ansatzes of which contain all the variables $t$, $x$, $y$:
\begin{eqnarray}\label{spich53}
\begin{aligned}
&\langle X_1,X_5\rangle,\quad \langle X_1,X_3\rangle,\quad \langle X_1\pm X_4,X_5\rangle,\quad \langle X_1+X_3,X_2+X_4\rangle,\\
&\langle X_1+2X_3,X_2+X_5\rangle,\quad \langle X_1+\alpha X_3,X_5\rangle,\quad \langle X_3,X_2+ X_6\rangle,\\
&\langle X_5,X_3+ X_6\rangle,\quad \langle X_3+X_6,X_2+\alpha X_6\rangle,\\
&\langle X_2+X_5+\beta X_6,X_4+\alpha X_6\rangle,\quad \langle X_2+X_6,X_4+\beta X_6\rangle,\\
&\langle X_2+X_5,X_4+\beta X_6\rangle,\quad \langle X_5,X_4+X_6\rangle,\quad \langle X_5+X_6,X_1+\tfrac{3}{2} X_3\rangle,\\
&\langle X_4+X_6,X_1+\frac{1}{2} X_3\rangle,\quad \langle X_5+X_6,X_4+\alpha X_6\rangle\quad (\alpha\ne 0,\quad \beta>0).\\
\end{aligned}
\end{eqnarray}

We have considered some cases of the two-dimensional subalgebras (\ref{spich53}) and obtained a symmetry reductions.
For some of them we construct exact invariant solutions of equation (\ref{spich48}). Let us give some examples.

\textbf{Example 1:}
$\langle X_4+\alpha X_6,X_5+X_6\rangle$.
The operators $X_4+\alpha X_6=\partial_y+\alpha\partial_x$, $X_5+X_6=\partial_x+t\partial_y-\partial_u$
produces the similarity solutions
\begin{eqnarray*}
 u=\frac{x-\alpha y}{\alpha t-1}+\varphi(\omega),\quad \omega=t,
\end{eqnarray*}
that reduces (\ref{spich48}) to the ordinary differential equations $\varphi'(\omega)=0$.

Solving the corresponding reduced equation, we obtain an invariant solution
\begin{eqnarray}\label{spich54}
 u=\frac{x-\alpha y}{\alpha t-1}+C,
\end{eqnarray}
where $C$ is an arbitrary constant.

In \cite{stog1} the exact solution of equation (\ref{spich48})
\begin{eqnarray}\label{spich55}
 u=\frac{x+y}{1-t},
\end{eqnarray}
is considered. The solution (\ref{spich55}) can be obtained from (\ref{spich54}) with $C=0$, $\alpha=1$
and using the equivalence transformation $t\rightarrow t$, $x\rightarrow x$, $y\rightarrow -y$, $u\rightarrow -u$.

\textbf{Example 2:} $\langle X_3+X_6,X_5\rangle$.
We obtain the similarity reduction
\begin{eqnarray*}
 u=-\frac{y}{t}+e^xf(\omega),\quad \omega=t,
\end{eqnarray*}
that reduces (\ref{spich48}) to the differential equation
\begin{eqnarray*}
\omega f'+(1-\omega)f=0.
\end{eqnarray*}
Solving the latter ordinary differential equation we obtain the similarity solution of equation (\ref{spich48})
\begin{eqnarray*}
 u=\frac{Ce^{x+t}-y}{t},\quad C ={\rm const}.
\end{eqnarray*}

\textbf{Example 3:} $\langle X_4+X_6,X_5\rangle$.
We obtain the similarity reduction
\begin{eqnarray*}
 u=\frac{x-y}{t}+f(\omega),\quad \omega=t;\quad \omega f'+f=0.
\end{eqnarray*}
The reduced equations obtained obviously have exact solutions which lead to the above exact solution of equations (\ref{spich48})
\begin{eqnarray*}
 u=\frac{x-y+C}{t}.
\end{eqnarray*}

\textbf{Example 4:} $\langle X_2+X_6,X_4+\beta X_6\rangle$, $\beta>0$.
We obtain the similarity reduction
\begin{eqnarray*}
 u=f(\omega),\quad \omega=\beta y+t-x;\quad f''+(\beta f-1)f'=0.
\end{eqnarray*}
Substituting the solution of the reduced equation into the ansatz, we construct the exact solutions of equation (\ref{spich48}):
\begin{eqnarray*}
 &u=\frac{2}{\beta(\beta y+t-x+C_1)}+\frac{1}{\beta};\\
&u=\frac{\sqrt{-1-2\beta C}}{\beta}+\tan\left(-\frac{\sqrt{-1-2\beta C}}{\beta}(\beta y+t-x)+C_1\right)+\frac{1}{\beta},\quad
2\beta C+1<0;\\
&u=\frac{-2\sqrt{1+2\beta C}}{\beta(C_1e^{-\sqrt{1+2\beta C)}(\beta y+t-x)}-1)}+\frac{1-\sqrt{1+2\beta C}}{\beta},\quad 2\beta C+1>0,\quad
C, C_1 ={\rm const}.
\end{eqnarray*}

\textbf{Example 5:} $\langle X_3,X_2+X_6\rangle$.
We obtain the similarity reduction
\begin{eqnarray*}
 u=yf(\omega),\quad \omega=t-x;\quad f''-f'+f^2=0,
\end{eqnarray*}
which in turn reduces to the Emden--Fowler equation, whose solutions are known for a number of parameter values \cite{stog27,stog29}.

\section{\label{6}Conclusions}

In this paper, the group classification in the class of nonlinear Kolmogorov equations (\ref{spich1}) is presented completely.
The main results on classification are collected in Table~\ref{tab:3}, where we list inequivalent cases of extensions with the corresponding
Lie invariance algebras. The list contains 22 equations and cannot be shortened by local substitution from group of equivalence transformations.
In Table~\ref{tab:4} we write down all the additional equivalence transformations, reducing some equations from our classification to others of simpler forms.
We have proved that exactly 14 equations in Table~\ref{tab:5} are reduced to other equations listed by the additional equivalence transformations.
For equations which have the maximal symmetry properties six-dimensional invariance algebra we can reduce these equations to equation
(\ref{spich48}) using the set additional equivalence transformations. Using the well-known method of classifying subalgebras of a Lie algebra,
we classify all one-dimensional and two-dimensional Lie subalgebras of an algebra which corresponds a nontrivial reduction of equation
(\ref{spich48}) to partial differential equations and ordinary differential equations respectively.


\begin{thebibliography}{27}%
\makeatletter
\providecommand \@ifxundefined [1]{%
 \@ifx{#1\undefined}
}%
\providecommand \@ifnum [1]{%
 \ifnum #1\expandafter \@firstoftwo
 \else \expandafter \@secondoftwo
 \fi
}%
\providecommand \@ifx [1]{%
 \ifx #1\expandafter \@firstoftwo
 \else \expandafter \@secondoftwo
 \fi
}%
\providecommand \natexlab [1]{#1}%
\providecommand \enquote  [1]{``#1''}%
\providecommand \bibnamefont  [1]{#1}%
\providecommand \bibfnamefont [1]{#1}%
\providecommand \citenamefont [1]{#1}%
\providecommand \href@noop [0]{\@secondoftwo}%
\providecommand \href [0]{\begingroup \@sanitize@url \@href}%
\providecommand \@href[1]{\@@startlink{#1}\@@href}%
\providecommand \@@href[1]{\endgroup#1\@@endlink}%
\providecommand \@sanitize@url [0]{\catcode `\\12\catcode `\$12\catcode
  `\&12\catcode `\#12\catcode `\^12\catcode `\_12\catcode `\%12\relax}%
\providecommand \@@startlink[1]{}%
\providecommand \@@endlink[0]{}%
\providecommand \url  [0]{\begingroup\@sanitize@url \@url }%
\providecommand \@url [1]{\endgroup\@href {#1}{\urlprefix }}%
\providecommand \urlprefix  [0]{URL }%
\providecommand \Eprint [0]{\href }%
\providecommand \doibase [0]{http://dx.doi.org/}%
\providecommand \selectlanguage [0]{\@gobble}%
\providecommand \bibinfo  [0]{\@secondoftwo}%
\providecommand \bibfield  [0]{\@secondoftwo}%
\providecommand \translation [1]{[#1]}%
\providecommand \BibitemOpen [0]{}%
\providecommand \bibitemStop [0]{}%
\providecommand \bibitemNoStop [0]{.\EOS\space}%
\providecommand \EOS [0]{\spacefactor3000\relax}%
\providecommand \BibitemShut  [1]{\csname bibitem#1\endcsname}%
\let\auto@bib@innerbib\@empty
\bibitem [{\citenamefont {Akhatov}, \citenamefont {Gazizov},\ and\
  \citenamefont {Ibragimov}(1991)}]{stog19}%
  \BibitemOpen
  \bibfield  {author} {\bibinfo {author} {\bibnamefont {Akhatov}, \bibfnamefont
  {I.~S.}}, \bibinfo {author} {\bibnamefont {Gazizov}, \bibfnamefont {R.~K.}},
  \ and\ \bibinfo {author} {\bibnamefont {Ibragimov}, \bibfnamefont {N.~K.}},\
  }\bibfield  {title} {\enquote {\bibinfo {title} {Nonlocal symmetries.
  heuristic approach},}\ }\href@noop {} {\bibfield  {journal} {\bibinfo
  {journal} {J. Soviet Mathematics}\ }\textbf {\bibinfo {volume} {55(1)}},\
  \bibinfo {pages} {1401–--50} (\bibinfo {year} {1991})}\BibitemShut
  {NoStop}%
\bibitem [{\citenamefont {Bluman}\ and\ \citenamefont {Kumei}(1989)}]{stog4}%
  \BibitemOpen
  \bibfield  {author} {\bibinfo {author} {\bibnamefont {Bluman}, \bibfnamefont
  {G.~W.}}\ and\ \bibinfo {author} {\bibnamefont {Kumei}, \bibfnamefont {S.}},\
  }\href@noop {} {\emph {\bibinfo {title} {Symmetries and Differential
  Equations}}}\ (\bibinfo  {publisher} {Springer-Verlag},\ \bibinfo {year}
  {1989})\BibitemShut {NoStop}%
\bibitem [{\citenamefont {Cherniha}, \citenamefont {Serov},\ and\ \citenamefont
  {Rassokha}(2008)}]{stog23}%
  \BibitemOpen
  \bibfield  {author} {\bibinfo {author} {\bibnamefont {Cherniha},
  \bibfnamefont {R.}}, \bibinfo {author} {\bibnamefont {Serov}, \bibfnamefont
  {M.}}, \ and\ \bibinfo {author} {\bibnamefont {Rassokha}, \bibfnamefont
  {I.}},\ }\bibfield  {title} {\enquote {\bibinfo {title} {Lie symmetries and
  form-preserving transformations of reaction diffusion convection
  equations},}\ }\href@noop {} {\bibfield  {journal} {\bibinfo  {journal} {J.
  Math. Anal. Appl.}\ }\textbf {\bibinfo {volume} {342}},\ \bibinfo {pages}
  {1363--79} (\bibinfo {year} {2008})}\BibitemShut {NoStop}%
\bibitem [{\citenamefont {Demetriou}, \citenamefont {Christou},\ and\
  \citenamefont {Sophocleous}(2007)}]{stog18}%
  \BibitemOpen
  \bibfield  {author} {\bibinfo {author} {\bibnamefont {Demetriou},
  \bibfnamefont {E.}}, \bibinfo {author} {\bibnamefont {Christou},
  \bibfnamefont {M.~A.}}, \ and\ \bibinfo {author} {\bibnamefont {Sophocleous},
  \bibfnamefont {C.}},\ }\bibfield  {title} {\enquote {\bibinfo {title} {On the
  classification of similarity solutions of a two-dimensional
  diffusion-advection equation},}\ }\href@noop {} {\bibfield  {journal}
  {\bibinfo  {journal} {J. Appl. Math. Comp.}\ }\textbf {\bibinfo {volume}
  {187}},\ \bibinfo {pages} {1333--50} (\bibinfo {year} {2007})}\BibitemShut
  {NoStop}%
\bibitem [{\citenamefont {Demetriou}, \citenamefont {Ivanova},\ and\
  \citenamefont {Sophocleous}(2008)}]{stog17}%
  \BibitemOpen
  \bibfield  {author} {\bibinfo {author} {\bibnamefont {Demetriou},
  \bibfnamefont {E.}}, \bibinfo {author} {\bibnamefont {Ivanova}, \bibfnamefont
  {N.}}, \ and\ \bibinfo {author} {\bibnamefont {Sophocleous}, \bibfnamefont
  {C.}},\ }\bibfield  {title} {\enquote {\bibinfo {title} {Group analysis of
  (2+1)- and (3+1)-dimensional diffusion-convection equations},}\ }\href@noop
  {} {\bibfield  {journal} {\bibinfo  {journal} {J. Math. Anal. Appl.}\
  }\textbf {\bibinfo {volume} {348}},\ \bibinfo {pages} {55--65} (\bibinfo
  {year} {2008})}\BibitemShut {NoStop}%
\bibitem [{\citenamefont {Dorodnitsyn}(1982)}]{stog28}%
  \BibitemOpen
  \bibfield  {author} {\bibinfo {author} {\bibnamefont {Dorodnitsyn},
  \bibfnamefont {V.~A.}},\ }\bibfield  {title} {\enquote {\bibinfo {title} {On
  invariant solutions of the equation of non-linear heat conduction with a
  source},}\ }\href@noop {} {\bibfield  {journal} {\bibinfo  {journal}
  {U.S.S.R. Comput. Math. Math. Phys.}\ }\textbf {\bibinfo {volume} {22}},\
  \bibinfo {pages} {115--22} (\bibinfo {year} {1982})}\BibitemShut {NoStop}%
\bibitem [{\citenamefont {Dorodnitsyn}, \citenamefont {Knyazeva},\ and\
  \citenamefont {Svirschevsky}(1983)}]{stog8}%
  \BibitemOpen
  \bibfield  {author} {\bibinfo {author} {\bibnamefont {Dorodnitsyn},
  \bibfnamefont {V.~A.}}, \bibinfo {author} {\bibnamefont {Knyazeva},
  \bibfnamefont {I.~V.}}, \ and\ \bibinfo {author} {\bibnamefont
  {Svirschevsky}, \bibfnamefont {S.~R.}},\ }\bibfield  {title} {\enquote
  {\bibinfo {title} {Group properties of the heat equation with a source in
  two-dimensional and three-dimensional cases},}\ }\href@noop {} {\bibfield
  {journal} {\bibinfo  {journal} {Diferential equations}\ }\textbf {\bibinfo
  {volume} {19}},\ \bibinfo {pages} {1215--23} (\bibinfo {year}
  {1983})}\BibitemShut {NoStop}%
\bibitem [{\citenamefont {Elwakil}, \citenamefont {Zahran},\ and\ \citenamefont
  {Sabry}(2005)}]{stog15}%
  \BibitemOpen
  \bibfield  {author} {\bibinfo {author} {\bibnamefont {Elwakil}, \bibfnamefont
  {S.~A.}}, \bibinfo {author} {\bibnamefont {Zahran}, \bibfnamefont {M.}}, \
  and\ \bibinfo {author} {\bibnamefont {Sabry}, \bibfnamefont {R.}},\
  }\bibfield  {title} {\enquote {\bibinfo {title} {Group classification and
  symmetry reduction of a (2+1) dimensional diffusion-advection equation},}\
  }\href@noop {} {\bibfield  {journal} {\bibinfo  {journal} {J. Applied Math.
  Phys.}\ }\textbf {\bibinfo {volume} {56}},\ \bibinfo {pages} {986--99}
  (\bibinfo {year} {2005})}\BibitemShut {NoStop}%
\bibitem [{\citenamefont {Huang}\ and\ \citenamefont {Ivanova}(2016)}]{stog11}%
  \BibitemOpen
  \bibfield  {author} {\bibinfo {author} {\bibnamefont {Huang}, \bibfnamefont
  {D.}}\ and\ \bibinfo {author} {\bibnamefont {Ivanova}, \bibfnamefont
  {N.~M.}},\ }\bibfield  {title} {\enquote {\bibinfo {title} {Algorithmic
  framework for group analysis of differential equations and its application to
  generalized zakharov-kuznetsov equations},}\ }\href@noop {} {\bibfield
  {journal} {\bibinfo  {journal} {J. Differential Equations}\ }\textbf
  {\bibinfo {volume} {260}},\ \bibinfo {pages} {2354--82} (\bibinfo {year}
  {2016})}\BibitemShut {NoStop}%
\bibitem [{\citenamefont {Ibragimov}(1994)}]{stog7}%
  \BibitemOpen
  \bibinfo {editor} {\bibnamefont {Ibragimov}, \bibfnamefont {N.~H.}},\ ed.,\
  \enquote {\bibinfo {title} {Crc handbook of {L}ie group analysis of
  differential equations},}\ \ (\bibinfo  {publisher} {CRC Press},\ \bibinfo
  {year} {1994})\BibitemShut {NoStop}%
\bibitem [{\citenamefont {Ibragimov}(1995)}]{stog5}%
  \BibitemOpen
  \bibinfo {editor} {\bibnamefont {Ibragimov}, \bibfnamefont {N.~H.}},\ ed.,\
  \enquote {\bibinfo {title} {Crc handbook of {L}ie group analysis of
  differential equations},}\ \ (\bibinfo  {publisher} {CRC Press},\ \bibinfo
  {year} {1995})\ pp.\ \bibinfo {pages} {291--328}\BibitemShut {NoStop}%
\bibitem [{\citenamefont {Kamke}(1977)}]{stog27}%
  \BibitemOpen
  \bibinfo {editor} {\bibnamefont {Kamke}, \bibfnamefont {E.}},\ ed.,\ \enquote
  {\bibinfo {title} {Differentialgleichungen: L{\"o}sungsmethoden und
  l{\"o}sungen, {I}, gew{\"o}hnliche differentialgleichungen},}\ \ (\bibinfo
  {publisher} {B. G. Teubner, Leipzig},\ \bibinfo {year} {1977})\BibitemShut
  {NoStop}%
\bibitem [{\citenamefont {Kingston}\ and\ \citenamefont
  {Sophocleous}(1998)}]{stog21}%
  \BibitemOpen
  \bibfield  {author} {\bibinfo {author} {\bibnamefont {Kingston},
  \bibfnamefont {J.~G.}}\ and\ \bibinfo {author} {\bibnamefont {Sophocleous},
  \bibfnamefont {C.}},\ }\bibfield  {title} {\enquote {\bibinfo {title} {On
  form-preserving point transformations of partial differential equations},}\
  }\href@noop {} {\bibfield  {journal} {\bibinfo  {journal} {J. Phys. A: Math.
  Gen.}\ }\textbf {\bibinfo {volume} {31}},\ \bibinfo {pages} {1597--619}
  (\bibinfo {year} {1998})}\BibitemShut {NoStop}%
\bibitem [{\citenamefont {Lahno}, \citenamefont {Spichak},\ and\ \citenamefont
  {Stogniy}(nian)}]{stog20}%
  \BibitemOpen
  \bibinfo {editor} {\bibnamefont {Lahno}, \bibfnamefont {V.~I.}}, \bibinfo
  {editor} {\bibnamefont {Spichak}, \bibfnamefont {S.~V.}}, \ and\ \bibinfo
  {editor} {\bibnamefont {Stogniy}, \bibfnamefont {V.~I.}},\ eds.,\ \enquote
  {\bibinfo {title} {{S}ymmetry {A}nalysis of {E}volution {T}ype
  {E}quations},}\ \ (\bibinfo  {publisher} {Kyiv; Institute of Mathematics of
  NAS of Ukraine},\ \bibinfo {year} {2002 (in Ukrainian)})\BibitemShut
  {NoStop}%
\bibitem [{\citenamefont {Nikititin}\ and\ \citenamefont
  {Popovych}(2001)}]{stog12}%
  \BibitemOpen
  \bibfield  {author} {\bibinfo {author} {\bibnamefont {Nikititin},
  \bibfnamefont {A.~G.}}\ and\ \bibinfo {author} {\bibnamefont {Popovych},
  \bibfnamefont {R.}},\ }\bibfield  {title} {\enquote {\bibinfo {title} {Group
  classification nonlinear {S}chrodinger equations},}\ }\href@noop {}
  {\bibfield  {journal} {\bibinfo  {journal} {Ukr. Math. J.}\ }\textbf
  {\bibinfo {volume} {53}},\ \bibinfo {pages} {1255--65} (\bibinfo {year}
  {2001})}\BibitemShut {NoStop}%
\bibitem [{\citenamefont {Olver}(1986)}]{stog3}%
  \BibitemOpen
  \bibfield  {author} {\bibinfo {author} {\bibnamefont {Olver}, \bibfnamefont
  {P.~J.}},\ }\href@noop {} {\emph {\bibinfo {title} {Application of Lie Groups
  to Differential Equations}}}\ (\bibinfo  {publisher} {Springer},\ \bibinfo
  {year} {1986})\BibitemShut {NoStop}%
\bibitem [{\citenamefont {Ovsiannikov}(1959)}]{stog6}%
  \BibitemOpen
  \bibfield  {author} {\bibinfo {author} {\bibnamefont {Ovsiannikov},
  \bibfnamefont {L.~V.}},\ }\bibfield  {title} {\enquote {\bibinfo {title}
  {Group properties of nonlinear heat equation},}\ }\href@noop {} {\bibfield
  {journal} {\bibinfo  {journal} {Dokl. AN SSSR}\ }\textbf {\bibinfo {volume}
  {125}},\ \bibinfo {pages} {492--5} (\bibinfo {year} {1959})}\BibitemShut
  {NoStop}%
\bibitem [{\citenamefont {Ovsiannikov}(1982)}]{stog2}%
  \BibitemOpen
  \bibfield  {author} {\bibinfo {author} {\bibnamefont {Ovsiannikov},
  \bibfnamefont {L.~V.}},\ }\href@noop {} {\emph {\bibinfo {title} {Group
  Analysis of Differential Equations}}}\ (\bibinfo  {publisher} {Acad. Press},\
  \bibinfo {year} {1982})\BibitemShut {NoStop}%
\bibitem [{\citenamefont {Pascucci}\ and\ \citenamefont
  {Polidoro}(2003)}]{stog1}%
  \BibitemOpen
  \bibfield  {author} {\bibinfo {author} {\bibnamefont {Pascucci},
  \bibfnamefont {A.}}\ and\ \bibinfo {author} {\bibnamefont {Polidoro},
  \bibfnamefont {S.}},\ }\bibfield  {title} {\enquote {\bibinfo {title} {On the
  {C}auchy problem for a nonlinear {K}olmogorov equation},}\ }\href@noop {}
  {\bibfield  {journal} {\bibinfo  {journal} {SIAM J. Math. Anal.}\ }\textbf
  {\bibinfo {volume} {35}},\ \bibinfo {pages} {579--95} (\bibinfo {year}
  {2003})}\BibitemShut {NoStop}%
\bibitem [{\citenamefont {Patera}\ and\ \citenamefont
  {Winternitz}(1975)}]{stog26}%
  \BibitemOpen
  \bibfield  {author} {\bibinfo {author} {\bibnamefont {Patera}, \bibfnamefont
  {J.}}\ and\ \bibinfo {author} {\bibnamefont {Winternitz}, \bibfnamefont
  {P.}},\ }\bibfield  {title} {\enquote {\bibinfo {title} {Continuous subgroups
  of the fundamental groups of physics. {I}. general methods and the poincare
  group},}\ }\href@noop {} {\bibfield  {journal} {\bibinfo  {journal} {J. Math.
  Phys.}\ }\textbf {\bibinfo {volume} {16}},\ \bibinfo {pages} {1597--1624}
  (\bibinfo {year} {1975})}\BibitemShut {NoStop}%
\bibitem [{\citenamefont {Polyanin}\ and\ \citenamefont
  {Zaitsev}(2003)}]{stog29}%
  \BibitemOpen
  \bibinfo {editor} {\bibnamefont {Polyanin}, \bibfnamefont {A.~D.}}\ and\
  \bibinfo {editor} {\bibnamefont {Zaitsev}, \bibfnamefont {V.~F.}},\ eds.,\
  \enquote {\bibinfo {title} {Handbook of exact solutions for ordinary
  differential equations},}\ \ (\bibinfo  {publisher} {Chapman \& Hall/CRC,
  Boca Raton, FL},\ \bibinfo {year} {2003})\BibitemShut {NoStop}%
\bibitem [{\citenamefont {Polyanin}\ and\ \citenamefont
  {Zaitsev}(2004)}]{stog25}%
  \BibitemOpen
  \bibinfo {editor} {\bibnamefont {Polyanin}, \bibfnamefont {A.~D.}}\ and\
  \bibinfo {editor} {\bibnamefont {Zaitsev}, \bibfnamefont {V.~F.}},\ eds.,\
  \enquote {\bibinfo {title} {Handbook of nonlinear partial differential
  equations},}\ \ (\bibinfo  {publisher} {Chapman \& Hall/CRC, Boca Raton,
  FL},\ \bibinfo {year} {2004})\BibitemShut {NoStop}%
\bibitem [{\citenamefont {Popovych}\ and\ \citenamefont
  {Ivanova}(2004)}]{stog22}%
  \BibitemOpen
  \bibfield  {author} {\bibinfo {author} {\bibnamefont {Popovych},
  \bibfnamefont {R.~O.}}\ and\ \bibinfo {author} {\bibnamefont {Ivanova},
  \bibfnamefont {N.~M.}},\ }\bibfield  {title} {\enquote {\bibinfo {title} {New
  results on group classification of nonlinear diffusion-convection
  equations},}\ }\href@noop {} {\bibfield  {journal} {\bibinfo  {journal} {J.
  Phys. A: Math. Gen.}\ }\textbf {\bibinfo {volume} {37}},\ \bibinfo {pages}
  {7547--65} (\bibinfo {year} {2004})}\BibitemShut {NoStop}%
\bibitem [{\citenamefont {Shtelen}\ and\ \citenamefont {Stogny}(1989)}]{stog9}%
  \BibitemOpen
  \bibfield  {author} {\bibinfo {author} {\bibnamefont {Shtelen}, \bibfnamefont
  {W.~M.}}\ and\ \bibinfo {author} {\bibnamefont {Stogny}, \bibfnamefont
  {V.~I.}},\ }\bibfield  {title} {\enquote {\bibinfo {title} {Symmetry
  properties of one- and two-dimensional {F}okker-{P}lanck equations},}\
  }\href@noop {} {\bibfield  {journal} {\bibinfo  {journal} {J. Phys. A: Math.
  Gen.}\ }\textbf {\bibinfo {volume} {22}},\ \bibinfo {pages} {539--43}
  (\bibinfo {year} {1989})}\BibitemShut {NoStop}%
\bibitem [{\citenamefont {Spichak}\ and\ \citenamefont
  {Stogniy}(1998)}]{stog10}%
  \BibitemOpen
  \bibfield  {author} {\bibinfo {author} {\bibnamefont {Spichak}, \bibfnamefont
  {S.~V.}}\ and\ \bibinfo {author} {\bibnamefont {Stogniy}, \bibfnamefont
  {V.~I.}},\ }\bibfield  {title} {\enquote {\bibinfo {title} {Symmetry
  classification and exact solutions of the {K}ramers equation},}\ }\href@noop
  {} {\bibfield  {journal} {\bibinfo  {journal} {J. Math. Phys.}\ }\textbf
  {\bibinfo {volume} {39}},\ \bibinfo {pages} {3505--10} (\bibinfo {year}
  {1998})}\BibitemShut {NoStop}%
\bibitem [{\citenamefont {Vasilenko}\ and\ \citenamefont
  {Yehorchenko}(2001)}]{stog13}%
  \BibitemOpen
  \bibfield  {author} {\bibinfo {author} {\bibnamefont {Vasilenko},
  \bibfnamefont {O.~F.}}\ and\ \bibinfo {author} {\bibnamefont {Yehorchenko},
  \bibfnamefont {I.~A.}},\ }\bibfield  {title} {\enquote {\bibinfo {title}
  {Group classification of multidimensional nonlinear wave equations},}\
  }\href@noop {} {\bibfield  {journal} {\bibinfo  {journal} {Proc. Inst.
  Mathematics of NAS of Ukraine}\ }\textbf {\bibinfo {volume} {36}},\ \bibinfo
  {pages} {63} (\bibinfo {year} {2001})}\BibitemShut {NoStop}%
\bibitem [{\citenamefont {Winternitz}\ and\ \citenamefont
  {Gazeau}(1992)}]{stog24}%
  \BibitemOpen
  \bibfield  {author} {\bibinfo {author} {\bibnamefont {Winternitz},
  \bibfnamefont {P.}}\ and\ \bibinfo {author} {\bibnamefont {Gazeau},
  \bibfnamefont {J.}},\ }\bibfield  {title} {\enquote {\bibinfo {title}
  {Allowed transformations and symmetry classes of variable coefficient
  {K}orteweg-de {V}ries equations},}\ }\href@noop {} {\bibfield  {journal}
  {\bibinfo  {journal} {Phys. Lett. A}\ }\textbf {\bibinfo {volume} {167}},\
  \bibinfo {pages} {246--50} (\bibinfo {year} {1992})}\BibitemShut {NoStop}%
\end{thebibliography}

\providecommand{\noopsort}[1]{}\providecommand{\singleletter}[1]{#1}%

\end{document}